\documentclass[reqno,dvipsnames]{amsart}
\pdfoutput=1

\usepackage{amssymb}
\usepackage{amsmath}
\usepackage{mathabx}
\usepackage{hyperref}
\usepackage{graphicx}
\usepackage{mathrsfs}

\usepackage{epigraph}   

\usepackage{amssymb, amsmath}
\usepackage{mathrsfs}
\usepackage{amscd} 
\usepackage{amsmath}
\usepackage[english]{babel}
\usepackage{microtype}               
\usepackage[T1]{fontenc}
\usepackage{lmodern}

\usepackage{hyperref}
\hypersetup{
	colorlinks   = true,          
	urlcolor     = blue,          
	linkcolor    = blue,          
	citecolor   = red             
}

\usepackage{enumitem}
\setenumerate[1]{label=(\roman*)}    

\usepackage[normalem]{ulem}
\usepackage{graphicx}
\usepackage[all,cmtip]{xy}
\usepackage{tikz}
\usetikzlibrary{arrows,decorations,shapes,shadows}
\usetikzlibrary{calc,intersections}
\usepackage{verbatim}

\usepackage{epigraph}   


\newbox\mybox
\def\overtag#1#2#3{\setbox\mybox\hbox{$#1$}\hbox to
  0pt{\vbox to 0pt{\vglue-#3\vglue-\ht\mybox\hbox to \wd\mybox
      {\hss$\ss#2$\hss}\vss}\hss}\box\mybox}
\def\undertag#1#2#3{\setbox\mybox\hbox{$#1$}\hbox to 0pt{\vbox to
    0pt{\vglue#3\vglue\ht\mybox\hbox to \wd\mybox
      {\hss$\ss#2$\hss}\vss}\hss}\box\mybox}
\def\lefttag#1#2#3{\hbox to 0pt{\vbox to 0pt{\vglue -6pt\hbox to
      0pt{\hss$\ss#2$\hskip#3}\vss}}#1}
\def\righttag#1#2#3{\hbox to 0pt{\vbox to 0pt{\vglue -6pt\hbox to
      0pt{\hskip#3$\ss#2$\hss}\vss}}#1}
\let\ss\scriptstyle
\def\splicediag#1#2{\xymatrix@R=#1pt@C=#2pt@M=0pt@W=0pt@H=0pt}
\def\Dot{\lower.2pc\hbox to 2pt{\hss$\bullet$\hss}}
\def\Circ{\lower.2pc\hbox to 2pt{\hss$\circ$\hss}}
\def\Vdots{\raise5pt\hbox{$\vdots$}}
\newcommand\lineto{\ar@{-}}
\newcommand\dashto{\ar@{--}}
\newcommand\dotto{\ar@{.}}

\let\cal\mathcal
\renewcommand{\setminus}{\smallsetminus}
\newcommand\Q{{\mathbb Q}}

\newcommand\C{{\mathbb C}}
\newcommand\Z{{\mathbb Z}}
\newcommand\N{{\mathbb N}}

\newcommand\cdiamond{{\bigskip\begin{center}$\diamond$\end{center}\bigskip}}

\DeclareMathOperator{\val}{val}
\DeclareMathOperator{\ord}{ord}

\DeclareMathOperator{\inn}{inn}

\DeclareMathOperator{\Aux}{Aux}

\newtheorem{theorem}{Theorem}[section]
\newtheorem{proposition}[theorem]{Proposition}
\newtheorem*{theorem*}{Theorem}
\newtheorem{corollary}[theorem]{Corollary}
\newtheorem{lemma}[theorem]{Lemma}

\theoremstyle{definition}

\newtheorem*{amalgamation*}{Amalgamation}
\newtheorem{example}[theorem]{Example}
\newtheorem{remark}[theorem]{Remark}

\newtheorem*{remark*}{Remark}
\newtheorem{definition}[theorem]{Definition}

\theoremstyle{plain}
\newcounter{algorithm}

\renewcommand{\int}{\operatorname{int}}

\usepackage{xcolor}



\keywords{Complex Surface Singularities, Resolution of singularities, Polar Varieties, Hyperplane sections, Nash transform, Lipschitz Geometry, Mather discrepency.}

\subjclass[2010]{Primary 32S25, 32S45; Secondary 14B05, 14E15.}


\newcommand\pa{{\mathfrak a}}

\newcommand\pb{{\mathfrak b}}

\begin{document}

	\title[Polar exploration of complex surface germs]{Polar exploration of complex surface germs}

\author[A. Belotto da Silva]{Andr\'e Belotto da Silva}
\address{Aix-Marseille Universit\'e, CNRS, Centrale Marseille, I2M, Marseille, France}
\email{\href{mailto:andre-ricardo.BELOTTO-DA-SILVA@univ-amu.fr}{andre-ricardo.belotto-da-silva@univ-amu.fr}}
\urladdr{\url{https://andrebelotto.com}}

\author[L. Fantini]{Lorenzo Fantini}
\address{Goethe-Universit\"at Frankfurt, Institut f\"ur Mathematik, Frankfurt am Main, Germany}
\email{\href{mailto:fantini@math.uni-frankfurt.de}{fantini@math.uni-frankfurt.de}}
\urladdr{\url{https://lorenzofantini.eu/}}

\author[A. N\'emethi]{Andr\'as N\'emethi}
\address{Alfr\'ed R\'enyi Institute of Mathematics, ELKH\newline \hspace*{4mm}
	Re\'altanoda utca 13-15, H-1053, Budapest, Hungary \newline \hspace*{4mm}
	ELTE - University of Budapest, Dept. of Geometry, Budapest, Hungary \newline
	\hspace*{4mm}
	BCAM - Basque Center for Applied Math.,
	Mazarredo, 14 E48009 Bilbao, Basque Country, Spain}
\email{\href{mailto:nemethi.andras@renyi.hu}{nemethi.andras@renyi.hu}}
\urladdr{\url{https://users.renyi.hu/~nemethi/}}

\author[A. Pichon]{Anne Pichon}
\address{Aix-Marseille Universit\'e, CNRS, Centrale Marseille, I2M, Marseille, France}
\email{\href{mailto:anne.pichon@univ-amu.fr}{anne.pichon@univ-amu.fr}}
\urladdr{\url{http://iml.univ-mrs.fr/~pichon/}}

\begin{abstract}
We prove that the topological type of a normal surface singularity $(X,0)$ provides finite bounds for the multiplicity and polar multiplicity  of $(X,0)$, as well as for the combinatorics of the families of generic hyperplane sections and of polar curves of the generic plane projections of $(X,0)$.
A key ingredient in our proof is a topological bound of the growth of the Mather discrepancies of $(X,0)$, which allows us to bound the number of point blowups necessary to achieve factorization of any resolution of $(X,0)$ through its Nash transform.
This fits in the program of \emph{polar explorations}, the quest to determine the generic polar variety of a singular surface germ, to which the final part of the paper is devoted.
\end{abstract}

\maketitle

\setlength{\epigraphwidth}{9.3truecm}
\epigraph{A ranger walked from his tent 10 km southwards, turned east, walked
	straight eastwards 10 km more, met his bear friend, turned north and after another 10 km found himself by his tent. 
	What colour was the bear and where did all this happen?
}{V. I. Arnold's (Odessa, 12 June 1937 -- Paris, 3 June 2010)\\
selection of problems for children from 5 to 15}

\section{Introduction}

{\renewcommand*{\thetheorem}{\Alph{theorem}}

A normal complex surface singularity $(X,0)$ can be resolved either by a sequence of normalized point blowups, following seminal work of Zariski \cite{Zariski1939} from the late nineteen thirties, or by a sequence of normalized Nash transforms, as was done half a century later by Spivakovsky \cite{Spivakovsky1990}.
The main goal of this paper is to shed some light on the relationship between these two resolution algorithms, which despite their importance and their centrality in modern mathematics is still quite mysterious, providing some evidence of a duality between the two which was initially observed by L\^e \cite[\S4.3]{Le2000}. 

While the blowup $\mathrm{Bl}_0X$ of  the maximal ideal of $(X,0)$ is the minimal transformation which resolves the family of generic hyperplane sections of $(X,0)$, the Nash transform  $\nu$ of $(X,0)$ is the minimal transformation that resolves the family of the polar curves associated with the generic plane projections of $(X,0)$.
Therefore, the study of the duality of resolution algorithms translates into the study of the relative positions on $(X,0)$ of those two families of curves.
This is the viewpoint we adopt in this paper. Our main theorem roughly states that fixed the topology of $(X,0)$, that is the homeomorphism class of its link, there are, up to homeomorphism, only a finite number of possible relative positions between these families of curves.

In order to give a precise statement of our result we need to introduce some additional notation. 
Let $\pi\colon X_\pi \to X$ be a good resolution of $(X,0)$, by which we mean a proper bimeromorphic
morphism from a smooth surface $X_\pi$ to $X$ which is an isomorphism outside of a simple normal crossing divisor $E=\pi^{-1}(0)$, and denote by $V(\Gamma_\pi)$ the set of vertices of the dual graph $\Gamma_\pi$ of $\pi$, so that every element $v$ of $V(\Gamma_\pi)$ corresponds to an irreducible component $E_v$ of $E$. 
We weight $\Gamma_\pi$ by attaching to each vertex $v$ the genus $g(v)\geq 0$ of the complex curve $E_v$ and the self-intersection $e(v)<0$ of $E_v$. 

For each $v$ in $V(\Gamma_\pi)$, we denote by $l_v$ the intersection multiplicity of the zero locus $h^{-1}(0)$ of a generic hyperplane section $h\colon(X,0)\to(\C,0)$ with $E_v$, and we call \emph{$\cal L$-vector} of $(X,0)$ the vector $L_\pi=(l_v)_{v\in V(\Gamma_\pi)}\in\Z_{\geq0}^{V(\Gamma_\pi)}$. 
Whenever $\pi\colon X_\pi \to X$ factors through $\mathrm{Bl}_0X$, the strict transform of such a generic hyperplane section via $\pi$ consists of a disjoint union of smooth curves that intersect transversely $E$ at smooth points of $E$, and $l_v$ is the number of such curves passing through the component $E_v$. Similarly, we denote by $p_v$ the intersection multiplicity of the strict transform of the polar curve of a generic plane projection $\ell\colon(X,0)\to(\C^2,0)$ with  $E_v$ and  we call \emph{$\cal P$-vector} of $(X,0)$ the vector $P_\pi=(p_v)_{v\in V(\Gamma_\pi)}\in\Z_{\geq0}^{V(\Gamma_\pi)}$. Whenever $\pi\colon X_\pi \to X$ factors through $\nu$ then such a strict transform consists of smooth curves intersecting $E$ transversely at smooth points, and $p_v$ equals the number of such curves through $E_v$. We can now give a precise statement of our main result:

\begin{theorem} \label{thm:main}
Let $M$ be a real $3$-manifold. 
There exists finitely many triplets $(\Gamma,L,P)$, where $\Gamma$ is a weighted graph and $L$ and $P$ are vectors in $(\Z_{\geq0})^{V(\Gamma)}$, such that there exists a normal surface singularity $(X,0)$ satisfying the following conditions:
\begin{enumerate}
\item The link of $(X,0)$ is homeomorphic to $M$.
\item $(\Gamma,L,P) = (\Gamma_\pi, L_\pi, P_\pi)$, where $\pi \colon X_{\pi} \to X$ is the minimal good resolution of $(X,0)$ which factors through the blowup of the maximal ideal and the Nash transform of $(X,0)$.
\end{enumerate}
\end{theorem}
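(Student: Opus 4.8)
The plan is to break the statement into a finite chain of finiteness results, each controlled by the topological type $M$. First I would recall that the link $M$ determines the minimal good resolution graph $\Gamma_0$ of $(X,0)$, together with all numerical decorations (genera, self-intersections): this is classical. The real content is to bound, topologically, how far one must go beyond $\pi_0$ (the minimal good resolution) to reach the minimal good resolution $\pi$ that factors through both $\mathrm{Bl}_0 X$ and the Nash transform $\nu$. Since passing from $\pi_0$ to $\pi$ is achieved by a finite sequence of point blowups, it suffices to bound the \emph{number} of such blowups by a quantity depending only on $\Gamma_0$ (hence only on $M$); once that bound is in place, $\Gamma_\pi$ ranges over a finite set of graphs, and then $L_\pi$ and $P_\pi$ must be shown to take finitely many values on each such graph.

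The factorization through $\mathrm{Bl}_0 X$ side is the more elementary one: $\mathrm{Bl}_0 X$ is dominated by any resolution on which the maximal ideal sheaf becomes invertible, and the number of blowups needed to make $\pi_0^{-1}\mathfrak{m}_X\cdot\mathcal{O}$ locally principal is governed by the multiplicities $l_v$, which in turn are read off from $\Gamma_0$ via the condition that the cycle $Z_h = \sum l_v E_v$ has vanishing intersection with every $E_v$ attached to a node (Laufer-type computation). Thus $L_\pi$ and the number of blowups to resolve it are $M$-bounded. The Nash side is the crux: here I would invoke the promised \emph{topological bound on the growth of the Mather discrepancies}. The Nash transform $\nu$ is dominated by $\pi$ precisely when the Jacobian/Nash ideal becomes invertible on $X_\pi$, i.e.\ when the divisor it cuts out is a normal crossings divisor supported on $E$; the order of vanishing of this ideal along $E_v$ is exactly (up to a universal shift) the Mather discrepancy $\hat{k}_v$. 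If $\hat{k}_v$ is bounded in terms of $\Gamma_0$, then the contact of the polar curve with $E$, hence $P_\pi$ and the number of further blowups required, is likewise bounded. So the argument reduces to: \emph{the Mather discrepancies of $(X,0)$ admit a bound depending only on the topology of the link} — which is the key ingredient stated in the abstract and which I would prove (or cite from the body of the paper) via a careful analysis of how $\hat{k}_v$ changes along a sequence of point blowups, combined with the fact that on the minimal good resolution the discrepancies are already topologically determined.

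Granting the discrepancy bound, I would assemble the proof as follows. Let $N = N(\Gamma_0)$ be the resulting bound on the total number of point blowups separating $\pi_0$ from $\pi$. There are only finitely many ways to perform at most $N$ successive point blowups starting from a fixed $\Gamma_0$ (at each stage one blows up either a point of a single component or an intersection point of two components, and the combinatorial type of the outcome is determined by this finite choice), so the set of possible graphs $\Gamma_\pi$ is finite. For a fixed such $\Gamma = \Gamma_\pi$, the vector $L_\pi$ is determined outright by the topology (it is the unique solution of the system $Z_{h}\cdot E_v \le 0$ singling out the maximal-ideal cycle, restricted to the components created over the base points of the generic hyperplane sections). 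Finally $P_\pi$: here I would use that $p_v$ can be expressed through the Mather discrepancies and the $l_v$ — concretely via a formula relating the polar curve's total transform to the ramification divisor of the generic projection, so that $P_\pi$ becomes a function of data already shown to be $M$-bounded. This yields finitely many triplets $(\Gamma,L,P)$, completing the proof.

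The main obstacle, as anticipated, is the topological bound on the Mather discrepancies: a priori the Nash algorithm can take many steps, the polar curves can have high and a priori unbounded contact order with the exceptional divisor, and it is not obvious that this contact is controlled by the link rather than by finer analytic invariants. Overcoming this is exactly where one must work — presumably by showing that any point blowup that strictly increases some Mather discrepancy must occur at a point whose infinitesimal position is constrained by the already-resolved combinatorics, forcing the process to terminate within an $M$-dependent number of steps.
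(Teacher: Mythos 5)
There are two genuine gaps. First, on the hyperplane side you assert that the $\cal L$-vector (equivalently the cycle $\sum l_v E_v$, or rather the maximal ideal cycle $Z_{\max}^{\Gamma}(X,0)=\sum m_vE_v$ that determines it) is ``read off from $\Gamma_0$'' and later that ``$L_\pi$ is determined outright by the topology.'' This is false in general: only the fundamental cycle $Z_{\min}^{\Gamma}$ is a topological invariant, and the inclusion of the semigroup of analytic cycles into the Lipman cone can be strict, so $Z_{\max}^{\Gamma}(X,0)$, the multiplicity $m(X,0)$, and $L_\pi$ all depend on the analytic structure. The actual argument has to \emph{bound} rather than compute these data: one first proves that the topology bounds $m(X,0)$ (this requires a genuine construction, namely realizing two functions with prescribed total transform $D$ via the Caubel--N\'emethi--Popescu-Pampu result and estimating $\dim\mathcal{O}_{X,0}/(f,g)=-D^2$), and then uses $\sum_v m_vl_v=-Z_{\max}^{\Gamma}(X,0)^2\le m(X,0)$ both to bound the number of blowups needed to factor through $\mathrm{Bl}_0X$ and to bound the possible vectors $L$. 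Your proposal contains no mechanism producing such a topological bound on the multiplicity, so even the ``elementary'' half of the argument does not close.

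Second, the crux of the Nash side is exactly the point you defer: you write that you would ``prove (or cite from the body of the paper)'' the topological bound on the Mather discrepancies, and your closing sketch (``any blowup increasing a Mather discrepancy must occur at a constrained point'') is not an argument and does not reflect the actual mechanism. What is needed is: (a) the construction, from the graph alone, of a divisor $D=\sum\nu_vE_v$ and a subsheaf $\Omega\subset\Omega^2_X$ of K\"ahler $2$-forms whose pullback generates a \emph{principal} ideal equivalent to $\mathcal{O}_{X_\pi}(-D)$ (this again rests on the realization result above, applied to two divisors in general position in the Lipman cone); (b) the resulting inequality $\hat k_v\le\nu_v$ on every resolution dominating $\pi$; and (c) the blowup formulas showing that at a basepoint of the family of generic polar curves $\hat k$ grows strictly faster than $\nu$ (by $\ord_{\pa}\mathcal{R}_0(\pi')\ge 1$), so that a lexicographically ordered auxiliary invariant built from the local polar multiplicities and the gaps $\nu_v-\hat k_v$ strictly decreases at each step and is bounded above topologically. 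Note also that a bound on $\hat k_v$ at one stage does not by itself bound the number of further blowups --- it is the strictly decreasing comparison invariant that forces termination --- and that the finiteness of $P_\pi$ on a fixed graph is obtained from the L\^e--Greuel--Teissier formula $m(\Pi,0)=m(X,0)-\chi(F_t)$ (with $\chi(F_t)$ computed from $(\Gamma,L)$), which again presupposes the multiplicity bound you are missing.
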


Recall that the \emph{link} of a normal surface singularity $(X,0)$, which is defined by embedding $(X,0)$ in a suitable smooth germ $(\mathbb{C}^N,0)$ and intersecting it with a small sphere, is, up to homeomorphism, a well defined real 3-manifold which determines and is determined by the homeomorphism class of the germ $(X,0)$ thanks to the Conical Structure Theorem.
Equivalently, the topological type of $(X,0)$ can be completely described in terms of the weighted dual graph $\Gamma_{\pi}$ of any good resolution $\pi$ of $(X,0)$, since $\Gamma_{\pi}$ is a plumbing graph of the link of $(X,0)$.
Conversely, Neumann \cite{Neumann1981} proved that the weighted dual graph of the minimal good resolution of $(X,0)$ is determined by the topology of the surface germ.
Adopting this point of view, the datum of the 3-manifold $M$ of Theorem~\ref{thm:main} is equivalent to the one of a weighted dual graph $\Gamma$, and as a consequence of the theorem we obtain the following:

\begin{corollary}\label{cor:main}
	Let $\Gamma$ be a weighted graph.
	Then there exist finitely many pairs $(L,P)$ of vectors $L$ and $P$ in $(\Z_{\geq0})^{V(\Gamma)}$ such that there exist a normal surface singularity $(X,0)$ and a good resolution $\pi$ of $(X,0)$ satisfying
	\[
	(\Gamma,L,P)=(\Gamma_{\pi}, L_{\pi}, P_{\pi}).
	\]
\end{corollary}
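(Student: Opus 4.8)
The plan is to deduce Corollary~\ref{cor:main} directly from Theorem~\ref{thm:main} by comparing an arbitrary good resolution with dual graph $\Gamma$ to the distinguished resolution appearing in that theorem. If no normal surface singularity admits a good resolution whose dual graph is $\Gamma$, the set of pairs in the statement is empty and there is nothing to prove, so I would fix from the start a germ with such a resolution; its link is then a fixed closed oriented $3$-manifold $M$, and since the dual graph of any good resolution is a plumbing graph of the link, \emph{every} normal surface singularity $(X,0)$ admitting a good resolution $\pi$ with $\Gamma_\pi = \Gamma$ has link homeomorphic to $M$. Applying Theorem~\ref{thm:main} to $M$, the set of triplets $(\Gamma_{\pi^0},L_{\pi^0},P_{\pi^0})$ realised by the minimal good resolution $\pi^0$ of such germs factoring through $\mathrm{Bl}_0X$ and $\nu$ is finite; call it $\{(\Gamma^0_1,L^0_1,P^0_1),\dots,(\Gamma^0_N,L^0_N,P^0_N)\}$, a list depending only on $\Gamma$.

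Now fix $(X,0)$ and a good resolution $\pi$ with $\Gamma_\pi=\Gamma$, and let $\pi^0$ be as above, realising some triplet $(\Gamma^0_i,L^0_i,P^0_i)$. Both $\pi$ and $\pi^0$ dominate the minimal good resolution $\pi_{\min}$ of $(X,0)$, so each is obtained from $\pi_{\min}$ by blowing up a finite forest of infinitely near points, say $T_\pi$ and $T^0_i$, with $|T_\pi| = |V(\Gamma)| - |V(\Gamma_{\pi_{\min}})| \le |V(\Gamma)|$ and $|T^0_i| \le |V(\Gamma^0_i)|$. Blowing up the union $T_\pi\cup T^0_i$ level by level produces a good resolution $\rho$ dominating both $\pi$ and $\pi^0$, with $|V(\Gamma_\rho)| \le |V(\Gamma)| + |V(\Gamma^0_i)|$; in particular $X_\rho \to X_{\pi^0}$ is a composition of at most $|V(\Gamma)|$ point blowups, and $X_\rho\to X_\pi$ contracts at most $\max_j |V(\Gamma^0_j)|$ exceptional curves, bounds depending only on $\Gamma$. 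On $X_{\pi^0}$ the strict transforms of a generic hyperplane section and of the polar curve of a generic plane projection are resolved, i.e.\ smooth, pairwise disjoint, and transverse to $E_{\pi^0}$ at smooth points, and their numbers of branches are $\sum_v l^0_{i,v}$ and $\sum_v p^0_{i,v}$, hence also bounded. I would then record, for a single point blowup and for a single contraction of a $(-1)$-component, how the triplet $(\Gamma,L,P)$ changes: in each case the new triplet is determined by the old one together with finitely many pieces of local combinatorial data — which exceptional components, and how many branches of each of the two curve families, pass through the point being blown up or contracted — all of which take finitely many values along the chains above.

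Composing these bounded sequences of elementary moves then shows that $(L_\pi,P_\pi)$ is obtained from one of the finitely many pairs $(L^0_i,P^0_i)$ through one of finitely many combinatorial recipes, so it ranges over a finite set depending only on $\Gamma$, which is the assertion of the corollary. The part requiring genuine care is this last bookkeeping: one must verify that passing to the common resolution $\rho$ costs only a bounded number of elementary moves — this is exactly where the uniform finiteness furnished by Theorem~\ref{thm:main} enters, via the bounds on $|V(\Gamma^0_i)|$ and on the branch numbers of the hyperplane-section and polar families — and one must check that each elementary move acts on $(\Gamma,L,P)$ through bounded local data alone. A subtle point here is that, after the contractions from $X_\rho$ down to $X_\pi$, the strict transforms of the hyperplane section and of the polar curve are in general no longer resolved, yet the intersection numbers $l_v$ and $p_v$ still transform according to a completely explicit combinatorial rule, so the argument goes through unchanged.
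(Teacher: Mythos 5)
Your argument is correct in substance, but it follows a genuinely different route from the paper. The paper does not deduce Corollary~\ref{cor:main} from the full strength of Theorem~\ref{thm:main}: it obtains it directly, halfway through Section~\ref{sec:proof_thm_A}, from the numerical bounds alone. Namely, for any good resolution $\pi$ with $\Gamma_\pi=\Gamma$ one has $l_v\le\sum_v m_vl_v=-\big(Z_{\max}^{\Gamma}(X,0)\big)^2\le m(X,0)\le n_M$ (Proposition~\ref{prop:main} plus Wagreich), and $p_v\le\sum_v m_vp_v\le m(\Pi,0)=m(X,0)-\chi(F_t)$ by the L\^e--Greuel--Teissier formula, where $\chi(F_t)$ takes only finitely many values because it is computed from the pair $(\Gamma',L')$ of a resolution factoring through the blowup of the maximal ideal and Proposition~\ref{prop:weaker_version_main_theorem} bounds those pairs; in particular the Mather-discrepancy machinery of Section~\ref{sec:kahler} is not needed for the corollary. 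You instead take Theorem~\ref{thm:main} as a black box and compare the given resolution $\pi$ with the distinguished resolution $\pi^0$ through a common domination $\rho$, tracking how $(\Gamma,L,P)$ changes under a bounded number of blowups (where the local data is indeed finite, since above $\pi^0$ both families stay resolved, so at most one branch of each family passes through any center and contributes with multiplicity one) and under blowdowns (where the projection formula gives the explicit rule $l_v\mapsto l_v+l_w\,(E_v\cdot E_w)$, and likewise for $p_v$, independently of the curves becoming singular or tangent downstairs). This bookkeeping is genuinely needed, since Theorem~\ref{thm:main} only speaks of one specific resolution, and your sketch of it is sound; what it costs is reliance on the hardest part of the theorem (the bound on the number of blowups to reach Nash factorization), whereas the paper's direct argument is lighter, yields the corollary before and independently of that part, and produces cleaner explicit bounds on $l_v$ and $p_v$ in terms of the topology encoded by $\Gamma$.
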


One of the ingredients of the proof of  Theorem~\ref{thm:main}  is the fact that the topological type of a normal surface singularity gives a bound of the multiplicity  of the germs realizing it.
We believe this statement to be of independent interest:

\begin{proposition}\label{prop:main}
	Let $M$ be a real $3$-manifold.
	Then there exists a natural number $n_M$ that only depends on the homeomorphism type of $M$ and such that, if $(X,0)$ is a normal surface singularity whose link is homeomorphic to $M$, the multiplicity $m(X,0)$ of $(X,0)$ is at most $n_M$.
\end{proposition}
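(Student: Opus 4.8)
The plan is to relate the multiplicity $m(X,0)$ to data visible on a good resolution, and then bound that data topologically. Recall that for a normal surface singularity the multiplicity equals $-Z_{\max}^2$, where $Z_{\max}$ is the maximal ideal cycle on (say) the minimal good resolution $\pi$ that factors through $\mathrm{Bl}_0X$; equivalently, writing $h\colon(X,0)\to(\C,0)$ for a generic hyperplane section and $(h\circ\pi) = \sum_v l_v E_v + (\text{strict transform})$, one has $m(X,0) = -\bigl(\sum_v l_v E_v\bigr)^2 = \sum_v l_v\cdot\mult_v(h\circ\pi)$ where $\mult_v$ records the intersection of the total transform with $E_v$. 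The upshot is that it suffices to bound the entries $l_v$ of the $\cal L$-vector on the minimal good resolution that factors through $\mathrm{Bl}_0X$, together with the number of vertices of $\Gamma_\pi$ — but the latter is already determined by $M$ once we know how many point blowups over the minimal good resolution are needed to resolve the generic hyperplane section, so the crux is really a topological bound on the $l_v$ and on the number of blowups.

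First I would fix the minimal good resolution $\pi_0$ of $(X,0)$, whose weighted dual graph $\Gamma_{\pi_0}$ is determined by $M$ (by Neumann's theorem cited in the excerpt). The key point is that the maximal ideal cycle $Z_{\max}$ on $\pi_0$ is \emph{not} a topological invariant in general, but it is constrained: $Z_{\max}$ is the smallest effective cycle $Z>0$ supported on all of $E$ such that $Z\cdot E_v\le 0$ for every $v$ and such that $\calO_X(-Z)$ is generated, and in particular $Z_{\max}\ge Z_{\min}$, the fundamental (Artin) cycle, which \emph{is} purely topological. So $Z_{\min}$ gives a topological lower bound $-Z_{\min}^2 \le m(X,0)$, but we need an \emph{upper} bound. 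For that I would argue as follows: the generic hyperplane section $h$ is an element of the maximal ideal $\mathfrak m$ whose divisor $(h\circ\pi_0)$ has, as its exceptional part, exactly $Z_{\max}$; since $h$ is generic, the strict transform of $h^{-1}(0)$ meets $E$ only at points where $\calO_X(-Z_{\max})$ fails to be locally principal, i.e. at the finitely many base points of the linear system, and the \emph{number} and \emph{location} of these base points, as well as the coefficients $l_v$, are governed by the failure of $Z_{\max}$ to satisfy $Z_{\max}\cdot E_v = 0$ — a failure measured by $-Z_{\max}\cdot E_v$, which is bounded once we bound $Z_{\max}$ itself.

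So everything reduces to an \emph{a priori upper bound for the coefficients of $Z_{\max}$ on $\pi_0$ in terms of $\Gamma_{\pi_0}$}. Here I would invoke the structure theory of the maximal ideal cycle and the antinef (Laufer) closure: $Z_{\max}$ is the antinef closure of some cycle whose support is all of $E$, and the antinef closure of any fixed cycle is bounded in terms of the negative-definite intersection form of $\Gamma_{\pi_0}$ alone — concretely, the Laufer computation sequence from $Z_{\min}$ terminates in finitely many steps bounded by a function of $\Gamma_{\pi_0}$, with each step increasing some coefficient by one, so the result is bounded by $-(\text{intersection matrix})^{-1}$ applied to an explicit vector. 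The subtlety, and what I expect to be the main obstacle, is that $Z_{\max}$ being the maximal ideal cycle uses analytic information (it must be $\ge$ the class of the general member of $\mathfrak m$), so one needs an argument that the general member of $\mathfrak m$ cannot have arbitrarily large exceptional support. I would handle this by a genericity/semicontinuity argument: among all cycles $Z$ with $\mathrm{Supp}(Z) = E$ and $Z\cdot E_v\le 0$ for all $v$, there is a unique minimal one, $Z_E$, which is topological; and $Z_{\max}$ is obtained from $Z_E$ by at most a bounded number of Laufer steps, because each vertex's coefficient in $Z_{\max}$ is bounded by the corresponding coefficient of the "total transform of the generic hyperplane passing through that point", which in turn one controls by blowing down. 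Finally, to bound the number of point blowups needed to pass from $\pi_0$ to the minimal good resolution factoring through $\mathrm{Bl}_0 X$: each such blowup decreases $Z_{\max}^2$ by a controlled amount and the process stops when the strict transform of $h^{-1}(0)$ is smooth and transverse, so the number of blowups is bounded by $m(X,0)$ itself — giving a self-improving estimate that, combined with the bound on $Z_{\max}$ above, closes the loop and yields the desired $n_M$.
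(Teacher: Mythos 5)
Your reduction hinges on an a priori, purely combinatorial upper bound for the maximal ideal cycle $Z_{\max}$ on the minimal good resolution, and this is exactly where the argument breaks down. The mechanism you invoke cannot deliver it: Laufer's computation sequence and the antinef closure compute the fundamental cycle $Z_{\min}$ (the minimal antinef cycle), not $Z_{\max}$. The maximal ideal cycle is $\min\{(f)_{\Gamma}\,:\,f\in\mathfrak M_{X,0}\}$, an analytic invariant, and your assertion that it is reached from $Z_{\min}$ (or from your $Z_E$) ``by at most a bounded number of Laufer steps'' is not argued anywhere --- it is essentially a restatement of the proposition you are trying to prove, and the phrase ``bounded by the coefficient of the total transform of the generic hyperplane'' is circular, since that coefficient \emph{is} the coefficient of $Z_{\max}$. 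To bound $Z_{\max}$ from above one must exhibit, on \emph{every} singularity realizing the graph $\Gamma$, at least one function $f\in\mathfrak M_{X,0}$ whose exceptional divisor $(f)_{\Gamma}$ is topologically controlled; this requires genuine analytic input (a realization/vanishing theorem), which is precisely what the paper imports from Caubel--N\'emethi--Popescu-Pampu through Proposition~\ref{thm:CaubelNemethiPopescu-Pampu2006}, and which is absent from your proposal.

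Even granting a topological bound for $Z_{\max}$ on the minimal good resolution $\pi_0$, your route to the multiplicity does not close. The identity $m(X,0)=-Z_{\max}^2$ holds only on a resolution where the generic hyperplane family has no basepoints; on $\pi_0$ one merely has $m(X,0)=-Z_{\max}^2+h_1^{\ast}\cdot h_2^{\ast}$ for two generic linear forms, and the basepoint contribution $h_1^{\ast}\cdot h_2^{\ast}$ is an analytic quantity your argument never controls. Bounding the number of blowups needed to reach a factorization through $\mathrm{Bl}_0X$ ``by $m(X,0)$ itself'' is again circular, since $m(X,0)$ is the target of the estimate. The paper avoids both issues simultaneously: it fixes a divisor $D$ depending only on $\Gamma$, deep enough in the Lipman cone that $\mathcal{O}_{X_{\pi}}(-D)$ is basepoint-free (Proposition~\ref{thm:CaubelNemethiPopescu-Pampu2006}), picks $f,g\in\mathfrak M_{X,0}$ with $(f)_{\Gamma}=(g)_{\Gamma}=D$ and \emph{disjoint} smooth strict transforms, computes $\dim\big(\mathcal{O}_{X,0}/(f,g)\big)=-D^2$, and concludes $m(X,0)\leq -D^2$ from the multiplicity inequalities $m(\mathfrak M_{X,0},\mathcal{O}_{X,0})\leq m\big((f,g),\mathcal{O}_{X,0}\big)\leq\dim\big(\mathcal{O}_{X,0}/(f,g)\big)$; note that this bounds $m(X,0)$ directly, without ever bounding $Z_{\max}$ first (the bound on $Z_{\max}$ is a consequence, not an ingredient). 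To salvage your approach you would need an analogue of that realization result; the combinatorics of $\Gamma$ alone will not produce the required function.
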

  
Moreover, an explicit value for the bound $n_M$ can be computed in terms of the topology of $M$.
Our proof of this result makes use of a construction of Caubel, Popescu-Pampu, and the third author \cite{CaubelNemethiPopescu-Pampu2006}.

Given a weighted graph $\Gamma$, Proposition~\ref{prop:main} would then be sufficient to prove the finiteness of the set of the $\cal L$-vectors $L$ such that the pair $(\Gamma,L)$ can be realized by a surface singularity $(X,0)$.
By a procedure that we call \emph{gardening}, we then obtain the finiteness of pairs  $(\Gamma,L)$ such that $\Gamma$ is the graph of the minimal good resolution factoring through the blowup of the maximal ideal. 
In order to obtain the finiteness of the $P$-vector, we then use the well-known L\^e--Greuel--Teissier formula \cite{LeTeissier1981} to deduce from Proposition~\ref{prop:main} a bound on the multiplicity of the polar curve of $(X,0)$ in terms of $n_M$ and of the Euler characteristic of the Milnor--L\^e fiber of a generic linear form on $(X,0)$, which can be computed in terms of the graph $\Gamma$.

While the argument above would suffice to deduce Corollary~\ref{cor:main}, in order to prove Theorem~\ref{thm:main} we also need to prove that the topological type of $(X,0)$ provide a bound of the number of point blowups necessary to go from any good resolution of $(X,0)$ to one factoring through the Nash transform of $(X,0)$.
We do this by considering a set of invariants, the so-called \emph{Mather discrepancies} introduced by de Fernex, Ein, and Ishii \cite{FernexEinIshii2008}, and proving that they are bounded from above by another invariant $\nu_v$ which only depends on the topological type of $(X,0)$.
We conclude by showing that the Mather discrepancies grow faster than the $\nu_v$ do when we perform any blowup necessary to achieve factorization through the Nash transform, which permits us to set up an inductive argument.
The key technical result allowing us to do this is Theorem~\ref{thm:sheaf_2-forms}, which proves the existence of a suitable sheaf of K\"ahler 2-forms that only depends on the topological type of $(X,0)$, leading to the definition of the invariants $\nu_v$.

\cdiamond

While Theorem~\ref{thm:main} provides a finite list of possibly realizable pairs of $\cal L$- and $\cal P$-vectors $L$ and $P$, the list outputted by its proof could still be fairly long.
In the final section of the paper we discuss how to sharpen this bound by studying additional restrictions on the relative positions of generic hyperplane sections and polar curves.

We recast this problem in the framework of \emph{polar exploration}, which is the quest of determining the $\cal P$-vector $P$ which can be realized by a normal surface singularity which realizes a fixed pair $(\Gamma,L)$.

In order to approach this question, we build on the so-called \emph{Laplacian formula} of a normal surface singularity.
This result, proven by three of the authors of the present paper in \cite{BelottodaSilvaFantiniPichon2019}, can be thought of as a local version of the L\^e--Greuel--Teissier formula referred to above.
It describes the behavior of an infinite family of metric invariants of a normal surface singularity $(X,0)$, called its \emph{inner rates}, that appeared naturally in the study of the Lipschitz geometry of $(X,0)$ in the foundational work \cite{BirbrairNeumannPichon2014}.
This tool has been used in the previous work \cite[Theorem~1.1]{BelottodaSilvaFantiniPichon2020} to prove that the problem of polar exploration admits a unique solution for a specific class of surface singularities, those that are \emph{Lipschitz Normally Embedded}.

Additional restrictions on the relative hyperplane and polar positions can be derived from the topology of Milnor--L\^e fibers; this is discussed in Lemma~\ref{lem:hurwitz}.

We conclude the paper by discussing in detail an example from \cite{MaugendreMichel2017}, for which by combining the Laplacian formula with the topological constraints from Lemma~\ref{lem:hurwitz} we obtain a unique solution to the problem of polar exploration (see Example~\ref{ex:MaugendreMichel2017}).

\subsection*{Acknowledgments}
We would like to thank Hussein Mourtada and Ana Reguera for useful discussions. 
This work has been partially supported by the project \emph{Lipschitz geometry of singularities (LISA)} of the \emph{Agence Nationale de la Recherche} (project ANR-17-CE40-0023).
The second author has also been partially supported by a \emph{Research Fellowship} of the \emph{Alexander von Humboldt Foundation}, while the third author has been partially supported by the NKFIH Grant “Élvonal (Frontier)” KKP 126683.
}


\section{Preliminaries on Lipman cones}

In this section we begin by recalling the notion of Lipman cone, and then prove an adaptation of a result of Caubel, Popescu-Pampu, and the third author from \cite{CaubelNemethiPopescu-Pampu2006} which will be useful in the remaining part of the paper.
A more thorough discussion of the basic objects described in this section can be found in \cite{Nemethi1999}.

\medskip

Let $\Gamma$ be a finite connected graph without loops and such that each vertex $v\in V(\Gamma)$ is weighted by two integers $g(v)\geq0$, called genus, and $e(v)\leq -1$, called self-intersection.
We assume that the incidence matrix induced by the self-intersections of the vertices of $\Gamma$, that is the matrix $I_\Gamma\in \Z^{V(\Gamma)}$ whose $(v,v')$-th entry is $e(v)$ if $v=v'$, and the number of edges of $\Gamma$ connecting $v$ to $v'$ otherwise, is negative definite.	
Let $E=\bigcup_{v\in V(\Gamma)}E_v$ be a configuration of curves whose dual graph is $\Gamma$, so that $I_{\Gamma} = (E_v \cdot E_{v'})$, and consider the free additive group $\cal G$ generated by the irreducible components of $E$, that is 
\[
\cal G = \bigg\{D =\sum_{v \in V(\Gamma)} d_v E_v \,\bigg|\, d_v \in \Z\bigg\}. 
\]
By a slight abuse of notation, we refer to the elements of $\cal G$ as \emph{divisors on $\Gamma$}.
On $\cal G$ there is a natural intersection pairing $D\cdot D'$, described by the incidence matrix $I_{\Gamma}$, and a natural partial ordering given by setting $\sum d_v E_v \leq \sum d'_v E_v$ if and only if $d_v \leq d'_v$ for every $v \in V(\Gamma)$. 

The {\it Lipman cone} of $\Gamma$ is the semi-group $\cal E^+$ of $\cal G$  defined as 
\[
\cal E^+ = \big\{D \in \cal G \,\big|\, D \cdot E_v \leq 0  \text{ for all } v \in V(\Gamma) \big\}.
\]

\begin{remark}
	By looking at the coefficients of a divisor we can identify $\cal G$ with the additive group $\Z^{V(\Gamma)}$.
	Then the Lipman cone $\cal E^+$ of $\Gamma$ is naturally identified with the cone $\Z_{\geq0}^{V(\Gamma)}\cap-I_\Gamma^{-1}\big(\Q_{\geq0}^{V(\Gamma)}\big)$, since by definition a divisor $\sum d_vE_v$ belongs to $\cal E^+$ if and only if the vector $I_\Gamma\cdot(d_v)_{v\in V(\Gamma)}$ belongs to $\Z_{\leq0}^{V(\Gamma)}$.
\end{remark}

A cardinal property of the Lipman cone $\cal E^+$, proven in \cite[Proposition 2]{Artin1966}, is that it has a unique nonzero minimal element $Z_{\min}^{\Gamma}$, called the {\it fundamental cycle} of $\Gamma$, and that moreover $Z_{\min}^{\Gamma} \succ 0$, that is the coefficients of $Z_{\min}^{\Gamma}$ are all strictly positive.
Observe that the existence of the fundamental cycle and the fact that $Z_{\min}^{\Gamma}\succ 0$ are equivalent to the fact that $D\succ 0$ for every nonzero divisor $D$ in $\cal E^+$.

Assume from now on that $\Gamma$ is the dual graph of a good resolution $\pi$ of a normal surface singularity $(X,0)$.
Notice that the Lipman cone, and therefore its fundamental cycle, only depend on the graph $\Gamma$, that is on the topology of $(X,0)$, and not on the complex geometry of $(X,0)$; the fundamental cycle $Z_{\min}^{\Gamma}$ can be computed from $\Gamma$ by using Laufer's algorithm from \cite[Proposition 4.1]{Laufer1972}.

Consider now a germ of analytic function $f \colon (X, 0) \to (\C, 0)$. 
The \emph{total transform} of $f$ by $\pi$  is the divisor $(f) = (f)_{\Gamma} + f^*$ on $X_\pi$, where $f^*$ is the strict transform of $f$ and $(f)_{\Gamma} = \sum_{v \in V(\Gamma)} m_v(f) E_v$ is the divisor supported on $E$ such that $m_v(f)$ is the multiplicity of $f \circ \pi$ along $E_v$. 
By \cite[Theorem 2.6]{Laufer1971}, we have
\begin{equation}\label{eq:IdentityTotalTransform}
(f) \cdot E_v=0 \quad \text{for all }v \in V(\Gamma).
\end{equation}
In particular, $(f)_\Gamma$ belongs to the Lipman cone $\cal E^+$ of $\Gamma$, and therefore the semi-group $\cal A_X^+ = \{ (f)_{\Gamma} \;|\; f \in \cal O_{X,0}\}$\label{def:A+} of $\cal G$ is contained in $\cal E^+$; it has a unique nonzero minimal element $Z_{\max}^{\Gamma}(X,0)$, which is called the {\it maximal ideal divisor} of $(X,0)$. 
%
%
Observe that $Z_{\max}^{\Gamma}(X,0)$ coincides with the cycle $(h)_{\Gamma}$ of a generic linear form $h \colon (X,0) \to (\C,0)$, so that $l_v = - Z_{\max}^{\Gamma}(X,0) \cdot E_v$ for all $v$ in $V(\Gamma)$, and, by the definition of the fundamental cycle, $Z_{\min}^{\Gamma} \leq  Z_{\max}^{\Gamma}(X,0)$.

In general the inclusion $\cal A_X^+ \subset \cal E^+$ is strict and thus we may have  $Z_{\min}^{\Gamma} \neq  Z_{\max}^{\Gamma}(X,0)$.
However, given a weighted graph $\Gamma$ with negative definite intersection matrix, for all $D \in  \cal E^+$, there exists  a normal complex surface singularity $(X,0)$ and a resolution $\pi \colon X_{\pi} \to X$ such that $\Gamma_{\pi}=\Gamma$ and $D \in \cal A_X^+$ (\cite{Pichon2001} or \cite{NemethiNeumannPichon2011}). 
So in particular, there exists  $(X,0)$  such that $Z_{\min}^{\Gamma} = Z_{\max}^{\Gamma}$. However, when $D$ is sufficiently big, it can be obtained as cycle of an analytic function on any surface singularity realizing the weighted graph $\Gamma$, as showed in \cite[Theorem~4.1]{CaubelNemethiPopescu-Pampu2006}. 
The following Proposition is an adaptation of that result.

\begin{proposition}
\label{thm:CaubelNemethiPopescu-Pampu2006}
	Let $\Gamma$ be a weighted graph and let $D\geq0$ be a nonzero effective divisor on $\Gamma$.
	Then:
	\begin{enumerate}\label{inequality_intersection_divisor}
		\item Assume that for every vertex $v$ of $\Gamma$ we have
	\begin{equation}\label{eq:Ineq1}
	D \cdot E_v + \val_\Gamma(v) + 2g(v) \leq 0,
	\end{equation}
	where $\val_\Gamma(v)$ denotes the valency of $v$ in $\Gamma$.
	Then, for every normal surface singularity $(X,0)$ and every good resolution $\pi \colon (X_{\pi},E) \to (X,0)$ of $(X,0)$ whose weighted dual graph is $\Gamma$, there exists a function $f \in \mathfrak M_{X,0}$ with an isolated singularity at $0$ such that $(f)=(f)_{\Gamma} + f^{\ast}$ is a normal crossing divisor on $X_{\pi}$ and $(f)_{\Gamma} = D$.
	Moreover, the line bundle $\mathcal O_{X_\pi}(-D)$ has no basepoints (that is, for every point $\pa\in E$ there exists a global section $s\in H^{0}\big(X_{\pi},\mathcal{O}_{X_{\pi}}(-D)\big)$ such that $s(\pa) \neq 0$).
	\item If moreover the stronger inequality
	\begin{equation}\label{eq:Ineq2}
	D \cdot E_v + \val_\Gamma(v) + 2g(v) +2 \leq 0
	\end{equation}
	holds for every vertex $v$ of $\Gamma$, then for every free point $\pa$ in $E$ (that is, $\pa$ is not a double point of $E$) we can find a function $f\in \mathfrak M_{X,0}$ with an isolated singularity at $0$ such that $(f)=(f)_{\Gamma} + f^{\ast}$ is a normal crossing divisor on $X_{\pi}$ and $(f)_{\Gamma} = D$ and such that its strict transform $f^*$ via $\pi$ intersects $E$ nontrivially at $\pa$.
	\end{enumerate}
\end{proposition}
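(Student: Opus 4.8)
The plan is to follow the strategy of \cite[Theorem~4.1]{CaubelNemethiPopescu-Pampu2006} closely, adapting it to our setting where we wish to control simultaneously the total transform divisor, the normal crossing property, and (in part (ii)) the location of a point of the strict transform. The heart of the matter is a cohomology vanishing statement: for a divisor $D$ on $\Gamma$ satisfying inequality \eqref{eq:Ineq1}, the line bundle $\mathcal{O}_{X_\pi}(-D)$ is generated by global sections, and consequently a generic section $s\in H^0\big(X_\pi,\mathcal{O}_{X_\pi}(-D)\big)$ defines a function $f=s$ on $X_\pi$ whose zero divisor is $D$ plus a reduced strict transform $f^*$ meeting $E$ transversely at smooth points of $E$, away from any prescribed finite bad set.

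\textbf{Step 1: Base-point freeness via Grauert--Riemenschneider-type vanishing.} First I would restrict $\mathcal{O}_{X_\pi}(-D)$ to each component $E_v$. Since $\deg\big(\mathcal{O}_{X_\pi}(-D)|_{E_v}\big) = -D\cdot E_v \geq \val_\Gamma(v)+2g(v)$ by \eqref{eq:Ineq1}, and $E_v$ is a smooth curve of genus $g(v)$, this degree is large enough that $\mathcal{O}_{X_\pi}(-D)|_{E_v}$ is base-point free on $E_v$ (degree $\geq 2g(v)$ suffices for that, and the extra $\val_\Gamma(v)$ handles the double points of $E$ lying on $E_v$, allowing us to also control the value at those points). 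One then runs the standard exact sequence argument: writing $D' = D+\sum_{v}E_v$ or building $D$ up one component at a time, the sequences
\[
0\to \mathcal{O}_{X_\pi}(-D-E_v)\to \mathcal{O}_{X_\pi}(-D)\to \mathcal{O}_{E_v}(-D)\to 0
\]
together with the vanishing of $H^1$ of the relevant sheaves — which follows from the negative definiteness of $I_\Gamma$ via the Grauert--Riemenschneider vanishing theorem or Laufer's cohomological computations — let us lift sections from the $E_v$'s and conclude that $\mathcal{O}_{X_\pi}(-D)$ has no base points on all of $E$, hence on all of $X_\pi$ since it is an isomorphism away from $E$. This is essentially the content of \cite[Theorem~4.1]{CaubelNemethiPopescu-Pampu2006}, and the point is that all hypotheses are purely in terms of $\Gamma$, so the conclusion holds for \emph{every} normal surface singularity realizing $\Gamma$.

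\textbf{Step 2: Genericity of the section.} Given base-point freeness, the linear system $|\mathcal{O}_{X_\pi}(-D)|$ defines a morphism to projective space, and by a Bertini-type argument a generic section $s$ has smooth zero locus away from the base locus — but there is no base locus — so $f^* := \{s=0\}$ is smooth and, again by genericity among the (finitely many, at each point) jet conditions, meets $E$ transversely at points that are neither double points of $E$ nor in any prescribed finite set. This gives $(f) = D + f^*$ normal crossing and $(f)_\Gamma = D$; since $f$ vanishes on all of $E$ with $D\succ 0$ and $f^*$ is reduced, $f$ has an isolated singularity at $0$, and $f\in\mathfrak{M}_{X,0}$ because $D\neq 0$.

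\textbf{Step 3: Part (ii) — prescribing a point of the strict transform.} For the stronger statement, fix a free point $\pa\in E_v$. Now I would use \eqref{eq:Ineq2}, which gives $-D\cdot E_v \geq \val_\Gamma(v)+2g(v)+2$, i.e. the degree on $E_v$ exceeds the base-point-free threshold by at least $2$. This surplus is exactly what is needed so that the sub-linear-system of sections \emph{vanishing at $\pa$} is still base-point free away from $\pa$ and cuts out, for a generic such section, a smooth curve $f^*$ passing through $\pa$ transversely to $E_v$ (one degree of the surplus is consumed imposing the condition $s(\pa)=0$ on the restriction to $E_v$, the other keeps the residual system base-point free so that Bertini still applies). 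Concretely, replace $\mathcal{O}_{X_\pi}(-D)$ by the ideal sheaf twist $\mathcal{I}_{\pa}\otimes\mathcal{O}_{X_\pi}(-D)$ and rerun Steps 1--2, the $+2$ in \eqref{eq:Ineq2} ensuring the same vanishing holds after twisting by $\mathcal{I}_\pa$.

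\textbf{The main obstacle} I expect is the bookkeeping in Step 1: ensuring the $H^1$-vanishing survives as we peel off components $E_v$ one at a time (the intermediate divisors $D+\sum_{v\in S}E_v$ need not themselves satisfy \eqref{eq:Ineq1}, so one must either order the components carefully, use a more robust vanishing input such as $H^1(X_\pi,\mathcal{O}_{X_\pi}(-D))=0$ for $D\in\mathcal{E}^+$ from Grauert--Riemenschneider applied to $\pi$, or invoke directly the machinery of \cite{CaubelNemethiPopescu-Pampu2006}), and simultaneously making sure that the sections we produce control the values at the double points of $E$ — which is why the valency term $\val_\Gamma(v)$ appears in the hypothesis rather than just $2g(v)$. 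Once the vanishing is in place, Steps 2 and 3 are standard Bertini arguments with finitely many jet conditions.
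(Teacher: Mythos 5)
Your proposal is correct in substance but organized differently from the paper, in two respects. For part (i), you essentially re-derive the existence statement of \cite[Theorem~4.1]{CaubelNemethiPopescu-Pampu2006} by restriction sequences and vanishing, whereas the paper does not reprove it: it only supplies the missing basepoint-freeness, by quoting two facts established inside the CNPP proof (surjectivity of $H^0\big(\mathcal{O}_{X_\pi}(-D)\big)\to H^0\big(\mathcal{O}_E(-D)\big)$ and, for each point of $E$, a section of $\mathcal{O}_E(-D)$ not vanishing there) and observing that the sections avoiding a given point form a proper subspace. Your sketch is fine, but be aware that negative definiteness of $I_\Gamma$ alone is not what drives the $H^1$-vanishing; it is inequality \eqref{eq:Ineq1}, which by adjunction reads $(D+K+E)\cdot E_v\leq -2$, that puts you in the range where the Laufer/CNPP computation applies — your own fallback of ``invoke the machinery of CNPP'' is exactly what the paper does. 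For part (ii), your route (impose vanishing at the free point $\pa$, i.e.\ pass to $\mathcal{I}_\pa\otimes\mathcal{O}_{X_\pi}(-D)$, and use the $+2$ surplus so the residual system on $E_v$ is still basepoint-free, forcing order of vanishing exactly one at $\pa$ along $E_v$ and hence a smooth transverse strict transform) is genuinely different in presentation from the paper's, which blows up $\pa$, checks that $D'=\sigma^{*}D+E_w$ satisfies \eqref{eq:Ineq1} on the new graph (this is precisely why the $+2$ is needed), applies part (i) upstairs, and pushes the function down. The two are morally equivalent — $\mathcal{I}_\pa\cdot\mathcal{O}_{X_\pi}(-D)$ is the pushforward of $\mathcal{O}_{X_{\pi'}}(-D')$ — but note that $\mathcal{I}_\pa$ is not invertible, so ``rerun Steps 1--2'' needs its cohomological input (surjectivity onto the relevant jets on $E_v$ after the twist) justified; the cleanest way to do so is to pass to the blowup, at which point your argument becomes the paper's. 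The blowup reduction buys you that all the normal-crossing and transversality bookkeeping at $\pa$ follows formally from part (i) applied to $D'$, while your version keeps everything on $X_\pi$ at the cost of having to verify those jet conditions by hand.
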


%

We remark that the reason why the second part requires a stronger inequality is because we want the inequality of the first part to hold also after blowing up a free point of $E$.

\begin{proof}
	What is missing from \cite[Theorem~4.1]{CaubelNemethiPopescu-Pampu2006} with respect to the first part of our statement is the basepoint-freeness of $\mathcal{O}_{X_\pi}(-D)$.
	To see this, we recall the following two facts which are obtained in the proof \cite[Theorem~4.1]{CaubelNemethiPopescu-Pampu2006}: first, the natural application $H^0\big(\mathcal{O}_{X_{\pi}}(-D)\big) \to H^0\big(\mathcal{O}_E(-D)\big)$ is surjective \cite[Page 685, second to last paragraph]{CaubelNemethiPopescu-Pampu2006}; second, for every point $\pa \in E$, there exists a global section of $\mathcal{O}_E(-D)$ which is non-zero at $\pa$ \cite[Page 685, last paragraph]{CaubelNemethiPopescu-Pampu2006}. 
	Now fix a point $\pa \in E$ and consider the set
	\[
	H_{\pa}= \big\{ s \in H^{0}( E,\mathcal{O}_E(-D)) \,\big|\, s(\pa) =0\big\}
	\]
	which is the kernel of the linear map $H^{0}\big(E,\mathcal{O}_E(-D)\big) \to \mathbb{C}$ given by the evaluation of the sections at $\pa$. 
	By the second fact, it is a proper subspace of $H^{0}\big(E,\mathcal{O}_E(-D)\big)$ of codimension at least one. Finally, the function $f$ of the statement of the theorem is taken as a global section of $\mathcal{O}_{X_{\pi}}(-D)$ whose projection to $\mathcal{O}_E(-D)$ is generic \cite[Page 686, second paragraph]{CaubelNemethiPopescu-Pampu2006}. 
	We conclude that we can suppose the strict transforms of $f$ does not pass through $\pa$. This proves $(i)$.
	
	In order to prove part $(ii)$, let $D $ be an effective cycle satisfying inequality \eqref{eq:Ineq2} for every vertex $v\in V(\Gamma)$.
	Fix a free point $\pa \in E$ and consider the blowup $\sigma \colon ( X_{\pi'},E') \to (X_{\pi},E)$ with center $\pa$. 
	We denote by $\pi' = \pi \circ \sigma$, and by $E_w'$ the irreducible component of the exceptional divisor created by $\sigma$. 
	Consider the cycle $D' = \sigma^{\ast}(D) + E_w'$. 
	Note that $D' \cdot E'_w =-1  = - \val_{\Gamma_{\pi'}}(w) - 2g(w)$ and, if $v \neq w$ then $v$ is also a vertex of $\Gamma_{\pi}$, which implies that
	\[
	D' \cdot E_v' = D \cdot E_v + E'_w \cdot E_v' \leq D\cdot E_v + 1,
	\]
	and we conclude that $D'$ satisfies inequality \eqref{eq:Ineq1} for every vertex $v\in \Gamma_{\pi'}$. 
	It follows from part $(i)$ that there exists a function $f \in \mathfrak M_{X,0}$ with an isolated singularity at $0$ such that $(f)=(f)_{\Gamma_{\pi'}} + f^{\ast}_{\pi'}$ is a normal crossing divisor on $X_{\pi}'$, $(f)_{\Gamma_{\pi'}} = D'$ and $f^{\ast}_{\pi'} \cdot E_w = \val_{\Gamma_{\pi'}}(w) + 2g(w) =1$.
	We conclude that $(f)=(f)_{\Gamma_{\pi}} + f^{\ast}_{\pi}$ is such that $(f)_{\Gamma_{\pi}} = D$, and $(f)$ is a normal crossing divisor on $X_{\pi}$ outside a neighborhood of $\pa$. 
	At the point $\pa$, we know that the order of $f^{\ast}_{\pi}$ must be one, implying that it is smooth (since, after the blowup, $(f)_{\Gamma_{\pi'}}=D'= \sigma^{\ast}(D) + E_w'$). 
	Furthermore, $f^{\ast}_{\pi}$ must be transverse to the exceptional divisor, since its strict transform is transverse to a free point of $E_w$. 
	We therefore conclude that $(f)$ is a normal crossing divisor on $X_{\pi}$ and that $f^{\ast}_{\pi}$ intersects the free point $\pa$. 
	This proves $(ii)$.
\end{proof}

\section{Bound on multiplicities}
\label{sec:bound_multiplicity}

In this section we prove Proposition~\ref{prop:main} using the construction of Proposition~\ref{thm:CaubelNemethiPopescu-Pampu2006} and some basic commutative algebra.

\medskip

As mentioned in the introduction, thanks to \cite{Neumann1981} the datum of an homeomorphism class of a real 3-manifold $M$ that can be realized as the link of a normal surface singularity is equivalent to the datum of a weighted connected graph $\Gamma$ with negative-definite self-intersection matrix.Moreover, for a given $M$, there is an unique such graph $\Gamma$ that is minimal in the sense that it has no vertex of genus 0, valency at most two, and self-intersection -1. Let us therefore fix such a minimal graph $\Gamma$,
let $(X,0)$ be a normal surface singularity, and let $\pi \colon (X_{\pi},E) \to (X,0)$ be the minimal good resolution of $(X,0)$, and assume that the weighted dual graph of $\pi$ is $\Gamma$. 
Let $D$ be an integral effective divisor satisfying the inequality~\eqref{eq:Ineq1} of Proposition~\ref{thm:CaubelNemethiPopescu-Pampu2006} for every vertex $v$ of $\Gamma$, so that the line bundle $\mathcal{O}_{X_\pi}(-D)$ is basepoint-free. 
Since $(X,0)$ is normal, reasoning like at the beginning of the proof of Proposition~\ref{thm:CaubelNemethiPopescu-Pampu2006} this implies the existence of two functions $f,\, g \in \mathfrak M_{X,0}$, with an isolated singularity at $0$, whose total transforms by $\pi$ are given by
\[
(f) = (f)_{\Gamma} +f^{\ast} = D + f^{\ast}, \quad (g) = (g)_{\Gamma} +g^{\ast} = D + g^{\ast},
\]
where the strict transforms $f^{\ast}$ and $g^{\ast}$ are smooth disjoint curves.
Consider the map $\Psi= (f,g)\colon (X,0) \to (\mathbb{C}^2,0)$, which is a finite morphism. 
Then the degree $\deg(\Psi)$ of $\Psi$ can be computed as the number of points in a general fiber of $\Psi$, that is the number $\dim\big(\mathcal{O}_{X,0} / (f,g)\big)$ of intersection points $(f= \epsilon) \cap (g=\delta)$ for sufficiently small $\epsilon$ and $\delta$ in a neighborhood of $0$ in $ \mathbb{C}$.
Since $f^{\ast} \cdot g^{\ast}=0$, we conclude that this intersection multiplicity is equal to $D \cdot g^{\ast}$. 
By \cite[Theorem 2.6]{Laufer1971} we have $(g) \cdot D = \big((g)_{\Gamma} + g^{\ast}\big) \cdot  D = 0$, and so $\deg(\Psi)=  g^{\ast} \cdot  D  = -D^2$, which implies that $\dim\big(\mathcal{O}_{X,0} / (f,g)\big)= -D^2$. 
It now follows from \cite[Theorem~14.10]{Matsumura1980} that $\dim\big(\mathcal{O}_{X,0} / (f,g)\big) \geq m\big((f,g),\mathcal{O}_{X,0}\big)$, and from \cite[Formula~14.4]{Matsumura1980} that $m\big((f,g),\mathcal{O}_{X,0}\big) \geq m(\mathfrak{M}_{X,0},\mathcal{O}_{X,0}) = m(X,0)$. We deduce that $m(X,0)  \leq -D^2$, which concludes the proof of Proposition~\ref{prop:main}. \hfill\qed


\section{Bounding the number of $\mathcal L$-vectors}

The results of the previous sections are sufficient to prove the following weaker version of Theorem~\ref{thm:main}.

\begin{proposition}
\label{prop:weaker_version_main_theorem}
	Let $M$ be a real $3$-manifold. 
	There exists finitely many pairs $(\Gamma,L)$, where $\Gamma$ is a weighted graph and $L$ is a vector in $(\Z_{\geq0})^{V(\Gamma)}$, such that there exists a normal surface singularity $(X,0)$ satisfying the following conditions:
	\begin{enumerate}
		\item The link of $(X,0)$ is homeomorphic to $M$.
		\item $(\Gamma,L) = (\Gamma_\pi, L_\pi)$, where $\pi \colon X_{\pi} \to X$ is the minimal good resolution of $(X,0)$ which factors through the blowup of the maximal ideal of $(X,0)$.
	\end{enumerate}
\end{proposition}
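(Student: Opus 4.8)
The plan is to derive Proposition~\ref{prop:weaker_version_main_theorem} by combining the multiplicity bound of Proposition~\ref{prop:main} with a combinatorial ``gardening'' procedure that controls the minimal good resolution factoring through $\mathrm{Bl}_0X$. First I would fix, as in Section~\ref{sec:bound_multiplicity}, the unique minimal weighted dual graph $\Gamma_0$ associated with $M$; since there are only finitely many possibilities for $M$ we may as well work with a single such $\Gamma_0$, and it suffices to show that the set of pairs $(\Gamma,L)$ arising as in the statement is finite. The starting observation is that for any surface singularity $(X,0)$ realizing $\Gamma_0$, the $\cal L$-vector in \emph{any} resolution is encoded by the maximal ideal divisor $Z_{\max}^{\Gamma}(X,0)$ via $l_v = -Z_{\max}^{\Gamma}(X,0)\cdot E_v$. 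So the key point is to bound $Z_{\max}^{\Gamma_0}(X,0)$, as a divisor on the fixed graph $\Gamma_0$, purely in terms of $\Gamma_0$.

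Here is where Proposition~\ref{prop:main} enters. The multiplicity $m(X,0)$ equals $Z_{\max}^{\Gamma_0}(X,0)\cdot Z_{\min}^{\Gamma_0}$ (the multiplicity is the self-intersection, up to sign, of the generic hyperplane section pulled back, computed against the fundamental cycle), so $m(X,0)\le n_M$ bounds a linear functional of $Z_{\max}^{\Gamma_0}(X,0)$ with strictly positive coefficients — recall $Z_{\min}^{\Gamma_0}\succ 0$. Since $Z_{\max}^{\Gamma_0}(X,0)$ lies in the Lipman cone $\cal E^+$, which is a rational polyhedral cone, and since $Z_{\min}^{\Gamma_0}$ pairs strictly positively with every nonzero element of $\cal E^+$ (indeed every nonzero element of $\cal E^+$ has all coefficients positive, and the intersection form is negative definite so $D\cdot Z_{\min}^{\Gamma_0}<0$ for $D\in\cal E^+\setminus\{0\}$ — one must phrase this correctly with signs), the set $\{D\in\cal E^+ : -D\cdot Z_{\min}^{\Gamma_0}\le n_M\}$ is finite. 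Thus there are only finitely many possible $Z_{\max}^{\Gamma_0}(X,0)$, hence only finitely many $\cal L$-vectors $L$ on $\Gamma_0$ itself. This already gives finiteness of pairs $(\Gamma_0, L)$ when the minimal good resolution of $(X,0)$ happens to factor through $\mathrm{Bl}_0X$.

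The remaining, and I expect main, difficulty is that the graph $\Gamma$ appearing in condition (ii) is \emph{not} $\Gamma_0$: it is the graph of the minimal good resolution factoring through $\mathrm{Bl}_0X$, which is obtained from $\Gamma_0$ by a sequence of point blowups. I would handle this by ``gardening'': starting from the minimal good resolution with graph $\Gamma_0$ and the now-bounded divisor $Z_{\max}^{\Gamma_0}(X,0)$, one blows up points of $E$ to reach a resolution through which a generic hyperplane section has smooth, transverse, pairwise-disjoint strict transform meeting $E$ only at free points. The point is that the location and number of infinitely near points that must be blown up is controlled by the divisor $(h)_\Gamma = Z_{\max}^\Gamma(X,0)$ and by the behavior of the base locus of $\cal O_{X_\pi}(-Z_{\max})$: at each stage the center of the next blowup is a base point of the corresponding linear system, and each blowup strictly decreases a nonnegative integer invariant (for instance the total defect $\sum_v (\text{multiplicity of base locus at generic point related data})$, or more concretely the discrepancy between $Z_{\max}$ and the fundamental-cycle-type bound), so the process terminates after a number of steps bounded in terms of $Z_{\max}^{\Gamma_0}(X,0)$ alone — which is itself bounded by the previous paragraph. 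Since each blowup is at a point, and the number of blowups is bounded, there are finitely many possible resulting graphs $\Gamma$; and on each such $\Gamma$ the vector $L_\pi$ is again determined by the (finitely many possible) pullback-plus-exceptional corrections of $Z_{\max}^{\Gamma_0}(X,0)$. Assembling these finitely many finite sets yields the finiteness of $(\Gamma, L)$, proving the proposition. The subtle point to get right is the precise monovariant that bounds the number of gardening blowups and the verification that it depends only on $\Gamma_0$ and $n_M$, not on the analytic type of $(X,0)$; Proposition~\ref{thm:CaubelNemethiPopescu-Pampu2006}, and in particular its basepoint-freeness statement, is exactly the tool that makes this uniform.
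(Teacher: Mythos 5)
Your overall architecture (reduce to the fixed minimal graph $\Gamma_0$, invoke Proposition~\ref{prop:main}, then control the passage to the minimal resolution factoring through $\mathrm{Bl}_0X$ by a bounded sequence of point blowups) is the same as the paper's, but the step that carries the whole proof is left unproved. You never exhibit the monovariant for the ``gardening'': you say that ``each blowup strictly decreases a nonnegative integer invariant'' and defer its identification, flagging it yourself as the subtle point, and the tool you propose to make it uniform, namely the basepoint-freeness statement of Proposition~\ref{thm:CaubelNemethiPopescu-Pampu2006}, is not what does this job (in the paper it enters the proof of Proposition~\ref{prop:main} and the sharpening of Remark~\ref{rem:bound_L}, not the termination argument). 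The paper's actual mechanism is concrete: the map $\alpha\colon X_{\pi'}\to X_{\pi}$ is a composition of blowups centered at basepoints of the family of generic hyperplane sections; under each such blowup the quantity $\sum_v m_vl_v=-\big(Z_{\max}^{\Gamma}(X,0)\big)^2$ strictly increases (the new exceptional coefficient is at least $1$), while on \emph{every} intermediate resolution one has $-\big(Z_{\max}^{\Gamma}(X,0)\big)^2\leq m(X,0)\leq n_M$ by Wagreich's theorem. Hence at most $n_M$ blowups occur, which bounds the list of possible graphs $\Gamma_{\pi'}$, and since $m_v\geq1$ one gets $l_v\leq n_M$ directly. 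Without an argument of this kind your proof does not terminate, and your fallback (``$L$ is determined by pullback-plus-exceptional corrections of $Z_{\max}^{\Gamma_0}$'') is also insufficient as stated, because the exceptional corrections are analytic data, not determined by $Z_{\max}^{\Gamma_0}$; one must bound them, which is again the same missing estimate.

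A second, more local, problem: the identity $m(X,0)=Z_{\max}^{\Gamma_0}(X,0)\cdot Z_{\min}^{\Gamma_0}$ is false. What is true is $-Z_{\max}^{\Gamma}(X,0)\cdot Z_{\min}^{\Gamma}\leq-\big(Z_{\max}^{\Gamma}(X,0)\big)^2\leq m(X,0)$, with equality in the second inequality only when the maximal ideal has no basepoints on $X_\pi$; so the inequality you actually need (a strictly positive linear functional of $Z_{\max}^{\Gamma_0}$ bounded by $n_M$) can be salvaged, and your deduction that only finitely many divisors $Z_{\max}^{\Gamma_0}$, hence finitely many $\cal L$-vectors on $\Gamma_0$, are possible is then correct (this is essentially the sharper viewpoint of Remark~\ref{rem:bound_L}). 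But note that this intermediate finiteness is not needed for the proposition: once the number of blowups and the values $l_v$ are bounded as above, the finiteness of the pairs $(\Gamma,L)$ follows at once.
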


\begin{proof}
As discussed at the beginning of Section~\ref{sec:bound_multiplicity}, the homeomorphism class of $M$ determines a minimal weighted graph $\Gamma_0$.
Let $(X,0)$ be a normal surface singularity whose minimal good resolution $\pi\colon X_\pi\to X$ has weighted dual graph $\Gamma_0$ and denote by $L=(l_v)_{v\in V(\Gamma_0)}$ the corresponding $\cal L$-vector and by $(m_v)_{v\in V(\Gamma_0)}$ the corresponding multiplicities. Denote now by $\pi'\colon X_{\pi'}\to X$ the minimal good resolution of $(X,0)$ that factors through the blowup of its maximal ideal.
We claim that there are finitely many possibilities for the weighted dual graph $\Gamma_{\pi'}$ of $\pi'$.
Indeed, the map $\pi'$ factors through $\pi$, and the resulting map $\alpha\colon X_{\pi'}\to X_\pi$ is a sequence of point blowups, each of which is centered at a basepoint of the family of generic hyperplane sections of $(X,0)$.
In particular, under each such blowup, the sum $\sum_v m_v l_v$ increases.
Since we have $\sum_{v\in V(\Gamma)} m_vl_v = - Z_{\max}^{\Gamma}(X,0)^2 \leq m(X,0)$ by \cite[Theorem 2.7]{Wagreich1970} (see also \cite[Theorem 2.18]{Nemethi1999}), and the latter is bounded by the integer $n_M$ from Proposition \ref{prop:main}, this implies that $\alpha$ consists of at most $n_M$ point blowups, which is sufficient to describe a finite list of graphs to which $\Gamma_{\pi'}$ belongs.
Moreover, since $m_v\geq1$ for all vertices $v$ of $\Gamma_{\pi'}$, we deduce that $l_v\leq n_M$ for all $v$, which proves that finitely many vectors in $\Z^{V(\Gamma_{\pi'})}$ can be realized as $\cal L$-vectors of a normal surface singularity.
\end{proof}

\begin{remark}\label{rem:bound_L}
	While we have striven to make the proof above as simple as possible, more optimal bounds on the number of realizable 
	vectors $L$	can be found via a more careful approach.
	For instance, not all vectors $L$ outputted by the proof above can be obtained as solution of a linear system of the form $L = Z_{\max}^{\Gamma}(X,0) \cdot E$, since $I_\Gamma^{-1}\cdot L$ needs not have integer coordinates in general.
	One can obtain a shorter list of possibly realizable $\cal L$-vectors by considering the smallest possible integral divisor $D$ to which Theorem~\ref{thm:CaubelNemethiPopescu-Pampu2006} applies (this divisor can be found very easily using the dual basis of the Lipman cone with respect to the intersection matrix of $\Gamma$),
	so that $Z_{\max}^{\Gamma}(X,0) \leq D$, which gives us a finite list of candidates for the maximal ideal divisor of any normal surface singularity realizing $\Gamma$, and therefore a much shorter list of possibilities for the vector $L$. We can then also reduce the list of possibilities for the graph $\Gamma_{\pi'}$ by only considering the possible maximal ideal divisors and $\cal L$-vectors above, which greatly reduces the number and combinatorics of the blowups in the morphism $\alpha$ appearing in our proof.
\end{remark}

\section{K\"ahler differentials and valuative invariants}
\label{sec:kahler}

In this section we introduce two valuative invariants associated with the sheaf of 2-forms on a normal surface singularity and prove some results that will allow us to use them to prove Theorem~\ref{thm:main} in Section~\ref{sec:proof_thm_A}.

\medskip

Given a normal surface singularity $(X,0)$, we consider the sheaf of \emph{K\"ahler 2-forms} $\Omega^2_X$ on $X$. 
We refer the reader to \cite[$\S$16]{Eisenbud1995} for the general definition; in this work it is enough to work with the following local description of its pullback to a resolution, proven in $\S$20.2 of \emph{loc.\ cit.}: given a local embedding $i \colon (X,0) \hookrightarrow (\mathbb{C}^N,0)$
and a good resolution $\pi\colon(X_{\pi},E) \to (X,0)$, then we have $\pi^{\ast}\Omega^2_X = (i \circ \pi)^{\ast} \Omega_{\mathbb{C}^N}^2$, where $\Omega_{\mathbb{C}^N}^2$ be the sheaf of differential $2$-forms on $\mathbb{C}^N$.

Now, since $X_{\pi}$ is smooth, the sheaf of 2-forms $\Omega_{X_{\pi}}^2$ is locally free of rank one. 
Consider the $\mathcal{O}_{X_{\pi}}$-subsheaf generated by the image of 
\(
\pi^{\ast} \colon \Omega_{X}^2 \to \Omega_{X_{\pi}}^2.
\)
This sheaf is of the form $\mathcal{F}_0(\pi) \,\Omega_{X_{\pi}}^2$, where $\mathcal{F}_0(\pi)$ is an ideal sheaf called the \emph{$0$-Fitting ideal} associated with $\pi$ (see \cite[$\S$~20.2]{Eisenbud1995} or \cite[$\S$2.3]{BelottodaSilvaBierstoneGrandjeanMilman2017}). 

We now recall the definition of a natural invariant associated with the sheaf $\Omega_X^2$, introduced in \cite[Definition~1.9]{FernexEinIshii2008} (see also \cite[Page~1259]{IshiiReguera2013} or \cite[\S~2.1]{FernexDocampo2014} for a point of view closer to the one we adopt here).
Given a vertex $v$ of $V(\Gamma_{\pi})$, the \emph{Mather discrepancy} $\hat k_v$ of $(X,0)$ along $v$ is defined as
\[
\hat{k}_v = \ord_v \big(\pi^{\ast}(\Omega_{X}^2)\big)= \ord_v\big(\mathcal{F}_0(\pi)\big).
\]

We also consider the \emph{residual ideal sheaf} $\mathcal{R}_0(\pi)$ of $\mathcal{F}_0(\pi)$ \cite[Definition~4.1]{BelottodaSilvaBierstoneGrandjeanMilman2017}, which is defined stalk-wise by setting
\begin{equation}\label{eq:Residual}
\mathcal{F}_0(\pi)_{\pa} = \mathcal{R}_0(\pi)_{\pa} \, \prod_{v \in V(\Gamma)\text{ s.t. } \pa \in E_v} {x_{v,\pa}}^{\hat{k}_v}
\end{equation}
for every closed point $\pa$ of $E$,
where $x_{v,\pa}$ is a reduced local equation for $E_v$ at $\pa$. 
Note that the order of vanishing  $\mbox{ord}_{\pa}(\mathcal{R}_0(\pi))$ of the ideal sheaf $\mathcal{R}_0(\pi)$ at a closed point $\pa$ of $E$ is zero except at at most finitely many points $\pa$; in particular its order along $E_v$ is zero for each $v \in V(\Gamma_{\pi})$.
Moreover, $\mathcal{R}_0(\pi)$ is trivial if and only if $\mathcal{F}_0(\pi)$ is principal, which is equivalent to the fact that $\pi$ factors through the Nash transform of $(X,0)$ (this result can be found in \cite[Theorem 2.5]{BelottodaSilvaBierstoneGrandjeanMilman2017}, but also seems to be implicit in other references, such as \cite{FernexEinIshii2008}).
In our context, it can also be seen from \cite[III, Theorem~1.2]{Spivakovsky1990} that $\pi$ factors through the Nash transform of $(X,0)$ if and only if the family of the polar curves of the generic plane projections has no basepoints on $X_\pi$.

The relation between the 0-Fitting ideal and generic polar curves can be seen explicitly as follows.
Let $(\Pi_{\cal D})_{\cal D \in \Omega}$ be the family of polar curves associated with the generic plane projections $\ell_{\cal D} \colon (X,0) \to (\C^2,0)$ of $(X,0)$ (with the notations of \cite[Section 2.2]{BelottodaSilvaFantiniPichon2019}, so that in particular it is an equisingular family in the sense of strong simultaneous resolution). 
Consider a closed point $\pa$ of $E$. 
For all $\cal D$ in $\Omega$, denote by $(\Pi_{\cal D, \pa}, 0)$ the union of the irreducible components of $(\Pi_{\cal D},0)$ whose strict transforms pass through $\pa$, and let $\Omega_{\pa}$ be the maximal Zariski open and dense subset of $\Omega$ such that the family $(\Pi_{\cal D, \pa})_{\cal D \in \Omega_{\pa}}$ is equisingular. 
Note that the latter condition is equivalent to ask that the number of irreducible components of $\Pi_{\cal D, \pa}$ is constant for every $\cal D$ in $\Omega_{\pa}$ and that, by definition, $\Pi_{\cal D, \pa}$ is nonempty if and only if $\pa$ is a basepoint of the family of polar curves $(\Pi_{\cal D})_{\cal D \in \Omega}$.
Then the ideal $\mathcal{R}_0(\pi)_{\pa}$ defines exactly the family consisting of the strict transforms via $\pi$ of the curves $\Pi_{\cal D, \pa}$, for $\cal D$ in $\Omega_\pa$.
Indeed, suppose that $\pa$ is a free point of $E$ (the case when $\pa$ is a double point is completely analogous) belonging to the component $E_v$ and that $(x,y)$ are local coordinates for $X_\pi$ at $\pa$ such that $E_v$ is locally defined by the equation $x=0$.
If locally at $\pa$ we write $\phi_{\cal D}(x,y)=(\ell_{\cal D} \circ \pi)(x,y)=\big(z_1(x,y),z_2(x,y)\big)$, then $\cal F_0(\pi)_\pa$ is generated by the forms 
\(
\phi_{\cal D}^* (dz_1 \wedge dz_2)_\pa,
\)
where $\cal D$ varies in $\Omega_\pa$, $\ell_{\cal D}\colon (X,0) \to (\C^2,0)$ is the associated plane projection, and $dz_1 \wedge dz_2$ is a standard volume form on $(\C^2,0)$.
Observe that we can write
\[
\phi_{\cal D}^* (dz_1 \wedge dz_2)_\pa = \mbox{Jac}\big(\phi(x,y)\big)(dx \wedge dy) = x^{\hat k_v}f_{\cal D}(x,y)(dx \wedge dy) 
\]
for some $f_{\cal D}$ in $\cal O_{X_\pi,\pa}$.
Since $\pi$ is an isomorphism outside of $E$, it follows that $f_{\cal D}(x,y)=0$ is the local equation at $\pa$ of the strict transform via $\pi$ of the critical locus of $\ell_{\cal D}$, which is by definition the polar curve of $\ell_{\cal D}$.
In other words, $f_{\cal D}(x,y)=0$ is the local equation of the curve $(\Pi_{\cal D, \pa}, 0)$ defined above.
We refer to \cite[Chapter~3, Section~1]{Spivakovsky1990} for further details.

Although the Mather discrepancies $\hat k_v$ depend on the analytic structure of $(X,0)$, we will show that the weighted dual graph $\Gamma_{\pi}$ of a good resolution of singularities $\pi\colon(X_{\pi},E) \to (X,0)$ is enough to bound their growth under blowups. We start the following result.

\begin{theorem}
	\label{thm:sheaf_2-forms}
	Let $\Gamma$ be a weighted graph.
	Then there exists a divisor $D$ on $\Gamma$ such that, for every normal surface singularity $(X,0)$ and every good resolution $\pi \colon (X_{\pi},E) \to (X,0)$ of $(X,0)$ whose weighted dual graph is $\Gamma$, the line bundle $\mathcal{O}_{X_{\pi}}(-D)$ is basepoint-free and there exists a subsheaf $\Omega \subset \Omega^2_{X}$ of the sheaf of K\"ahler differentials of $X$, such that the pullback $\pi^{\ast}(\Omega)$ generates a $\mathcal{O}_{X_{\pi}}$-subsheaf of $\Omega^2_{X_{\pi}}$ of the form $\mathcal{J}(\pi) \Omega^{2}_{X_{\pi}}$, where $\mathcal{J}(\pi)$ is a principal ideal sheaf equivalent to $\mathcal{O}_{X_{\pi}}(-D)$.
\end{theorem}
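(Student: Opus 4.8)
The plan is to construct $\Omega$ by taking a well-chosen finite collection of $2$-forms on $(X,0)$ and letting $\Omega$ be the subsheaf of $\Omega^2_X$ that they generate; the divisor $D$ will be the one furnished by Proposition~\ref{thm:CaubelNemethiPopescu-Pampu2006}. The point is that a global section of $\Omega^2_X$ pulls back, on a resolution $\pi\colon(X_\pi,E)\to(X,0)$, to a section of $\Omega^2_{X_\pi}$, and since $\Omega^2_{X_\pi}$ is locally free of rank one the vanishing divisor of such a section is an effective divisor supported on $E$ plus a residual part; our task is to arrange that two such sections have the same divisorial part along $E$, equal to a fixed $D$, and that their common residual parts have no common zero on $X_\pi$. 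If we can do this, then the subsheaf they generate in $\Omega^2_{X_\pi}$ is exactly $\mathcal{O}_{X_\pi}(-D)\,\Omega^2_{X_\pi}$, which is what we want.

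First I would fix, via Proposition~\ref{thm:CaubelNemethiPopescu-Pampu2006}, a divisor $D$ on $\Gamma$ large enough that the inequality \eqref{eq:Ineq1} (in fact \eqref{eq:Ineq2}, to have room to produce several sections in general position) holds at every vertex; this guarantees that $\mathcal{O}_{X_\pi}(-D)$ is basepoint-free for \emph{every} surface singularity $(X,0)$ realizing $\Gamma$ and every good resolution with dual graph $\Gamma$. Next I would produce, for such an $(X,0)$, two functions $f,g\in\mathfrak M_{X,0}$ with isolated singularities whose total transforms satisfy $(f)_\Gamma=(g)_\Gamma=D$ and whose strict transforms $f^*,g^*$ are smooth and disjoint, exactly as in the proof of Proposition~\ref{prop:main}. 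The $2$-form $df\wedge dg$ is then a global section of $\Omega^2_X$, and a local Jacobian computation as in the discussion preceding the statement shows that its pullback $\pi^*(df\wedge dg)$ vanishes to order exactly $m_v(f)+m_v(g)-\text{(correction)}$ along each $E_v$; more precisely its divisor is $(f)_\Gamma + (g)_\Gamma - (\text{something only depending on }\pi)$ plus the strict transforms $f^*$ and $g^*$. The subtle point is that the divisorial part of $\pi^*(df\wedge dg)$ along $E$ is \emph{not} quite $2D$: there is a contribution from the Mather discrepancy, so what one gets is $\pi^*(df\wedge dg) = (\text{section cutting out } 2D - \hat K_\pi + f^* + g^*)$ where $\hat K_\pi=\sum \hat k_v E_v$; I would therefore instead work with the ratio of such forms, or rather build $\Omega$ from the $\mathcal{O}_X$-module generated by $df\wedge dg$ and $df\wedge dg'$ for a third function $g'$ with $(g')_\Gamma=D$ and $g'^*$ disjoint from $f^*$ and $g^*$, chosen (using part (ii) of Proposition~\ref{thm:CaubelNemethiPopescu-Pampu2006}) so that the strict transforms $g^*$ and $g'^*$ hit distinct, prescribed free points of $E$. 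Then the two sections $\pi^*(df\wedge dg)$ and $\pi^*(df\wedge dg')$ of $\Omega^2_{X_\pi}$ have the same divisorial part along $E$ — namely $2D-\hat K_\pi$ — while their residual zeros are $f^*+g^*$ and $f^*+g'^*$ respectively, which share only $f^*$; iterating with enough such functions $g,g',g'',\dots$ whose strict transforms are pairwise disjoint and collectively miss every point of $E$, one removes the common factor and concludes that $\mathcal{J}(\pi):=\pi^*(\Omega)\cdot(\Omega^2_{X_\pi})^{-1}$ is the principal ideal $\mathcal{O}_{X_\pi}(-(2D-\hat K_\pi))$ — but this depends on $\pi$ through $\hat K_\pi$, which is not allowed.

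To fix the dependence on $\hat K_\pi$ I would take $\Omega$ to be generated not by Jacobians $df\wedge dg$ but by forms of the shape $\frac{1}{f}\,df\wedge dg$ when this is regular, or — cleaner — by noting that what really matters is that along each $E_v$ the order of vanishing of the generic pullback is $m_v$ for a quantity $m_v$ that we can force to be a fixed function of $\Gamma$ alone. The honest route is: since $\mathcal{O}_{X_\pi}(-D)$ is basepoint-free and globally generated by sections $s_1,\dots,s_r$ coming from functions $f_i\in\mathfrak M_{X,0}$ with $(f_i)_\Gamma=D$ and pairwise disjoint smooth strict transforms covering $E$, the forms $\omega_{ij}:=df_i\wedge df_j$ generate a subsheaf $\Omega\subset\Omega^2_X$ whose pullback is $\mathcal{O}_{X_\pi}(-D)\otimes(\text{ideal generated by the }f_i^*)\cdot\Omega^2_{X_\pi}$ after dividing by the unavoidable common Jacobian factor — and because the strict transforms jointly miss every point of $E$, that ideal is the whole structure sheaf away from finitely many points, so one more application of the basepoint-freeness cleans it up. I expect the \textbf{main obstacle} to be precisely this bookkeeping: isolating the divisorial part of a generic pullback $\pi^*\omega$ along $E$, showing it equals a fixed multiple of the universal divisor $D$ independently of the analytic type and of $\pi$ (so in particular that the Mather-discrepancy term either cancels or can be absorbed into the choice of $D$), and verifying that finitely many generators of $\Omega$ suffice to kill all residual common zeros uniformly in $(X,0)$. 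Once the divisor $D$ is enlarged enough that Proposition~\ref{thm:CaubelNemethiPopescu-Pampu2006}(ii) applies after any single further blowup of a free point — which is the same trick used there — the uniformity over all $(X,0)$ realizing $\Gamma$ follows, and $\mathcal{J}(\pi)$ comes out principal and equivalent to $\mathcal{O}_{X_\pi}(-D)$ as claimed.
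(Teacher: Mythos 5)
Your overall strategy (wedge differentials of functions supplied by Proposition~\ref{thm:CaubelNemethiPopescu-Pampu2006} and let them generate $\Omega$) is the same as the paper's, but the proposal has a genuine gap at the double points of $E$, and it is precisely the point where your construction differs from the paper's. You take all your functions $f,g,g',\dots$ with the \emph{same} divisorial part $D$. At a double point $\pa\in E_v\cap E_w$ every such function pulls back as $x^{d_v}y^{d_w}U$ with $U$ a unit, so for any two of them the exponent vectors are equal and the leading Jacobian determinant vanishes: writing $m=x^{d_v}y^{d_w}$, one gets
\[
d(mU_1)\wedge d(mU_2)\;=\;m\,dm\wedge\bigl(U_1\,dU_2-U_2\,dU_1\bigr)+m^2\,dU_1\wedge dU_2,
\]
whose $dx\wedge dy$ coefficient lies in $x^{2d_v-1}y^{2d_w-1}\mathfrak m_{\pa}$. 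Hence \emph{every} generator $df_i\wedge df_j$ of your $\pi^*(\Omega)$ vanishes at $\pa$ to order strictly larger than its divisorial part along $E_v$ and $E_w$, and this extra zero cannot be removed by adding more functions with the same divisorial part or by moving their strict transforms to other free points. Consequently the ideal $\mathcal J(\pi)$ you obtain has order $2d_v-1$ along the components but a strictly smaller stalk at each double point, so it is \emph{not} principal and not equivalent to $\mathcal O_{X_\pi}(-D')$ for any divisor $D'$ supported on $E$. The missing idea is the paper's: take \emph{two distinct} divisors $D_1=\sum\alpha_vE_v$ and $D_2=\sum\beta_vE_v$, both satisfying inequality~\eqref{eq:Ineq2}, together with the open condition $\alpha_v\beta_w-\alpha_w\beta_v\neq0$ for adjacent $v,w$, and set $D=D_1+D_2-E$; then at a double point the wedge has leading coefficient $(\alpha_v\beta_w-\alpha_w\beta_v)x^{\alpha_v+\beta_v-1}y^{\alpha_w+\beta_w-1}$, which is exactly what makes $\mathcal J(\pi)$ principal there.

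A second, smaller issue: your claim that the divisorial part of $\pi^*(df\wedge dg)$ is $2D-\hat K_\pi$, with $\hat K_\pi$ the Mather discrepancy divisor, is not correct, and the detour it forces (forms $\frac1f\,df\wedge dg$, ``absorbing'' $\hat K_\pi$) is unnecessary. For generic members of the two linear systems the order of $\pi^*(df_1\wedge df_2)$ along $E_v$ is exactly $\alpha_v+\beta_v-1$, a quantity determined by $\Gamma$ alone; the Mather discrepancy only gives the lower bound $\hat k_v\leq\nu_v$ (this is Lemma~\ref{lem:compatibility}, which is a \emph{consequence} of the theorem, not an obstruction to proving it). So no dependence on the analytic type enters the divisorial part once the explicit local normal forms of Proposition~\ref{thm:CaubelNemethiPopescu-Pampu2006} are used, which is how the paper verifies principality point by point at free and double points.
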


Observe that if we had only been interested in the existence of a basepoint-free subsheaf of $\Omega_{X_{\pi}}^2$ we could have obtained it from \cite[Theorem 3.1]{Laufer1983}.
However, for our applications, and namely to prove Lemma~\ref{lem:compatibility} below, it is important to require that this subsheaf is the pullback of a subsheaf of $\Omega_{X}^{2}$.

\begin{proof}
	Let $D_1 = \sum_{v \in V(\Gamma)} \alpha_v E_v$ and $D_2= \sum_{v \in V(\Gamma)} \beta_v E_v$ be two integral effective divisors satisfying inequality \eqref{eq:Ineq2} for every vertex $v \in V(\Gamma)$, and which verify the following open condition in the Lipman cone:
	\(
	\alpha_v \beta_w - \beta_v \alpha_w \neq 0,
	\) for all vertexes $v$ and $w$ of $V(\Gamma)$ which are connected by an edge. We set $D = D_1 + D_2 - E$.
	
	Note that by Proposition \ref{thm:CaubelNemethiPopescu-Pampu2006}(i), both line bundles $\mathcal{O}(-D_1)$ and $\mathcal{O}(-D_2)$ have no basepoints. 
	This fact together with the normality of $(X,0)$ implies the existence of two ideals $\mathcal{I}_1$ and $\mathcal{I}_2$ of $\mathfrak M_{X,0}$ whose pullbacks by $\pi$ are equivalent to  $\mathcal{O}(-D_1)$ and $\mathcal{O}(-D_2)$ respectively. 
	By considering a local embedding $i \colon (X,0)\hookrightarrow (\mathbb{C}^N,0)$, we may consider without loss of generality that $\mathcal{I}_1$ and $\mathcal{I}_2$ are ideals of $\mathfrak M_{(\mathbb{C}^N,0)}$. 
	Consider the $\mathcal{O}_{(\mathcal{C}^N,0)}$ submodule of $2$-forms $\Omega \subset \Omega_{\mathbb{C}^N}^2$ generated by all $2$-forms of the form $df_1 \wedge df_2$ with $f_1$ in $\mathcal{I}_1 $ and $f_2$ in $\mathcal{I}_2 $. We claim that $i^{\ast}\Omega$ is the desired sheaf. 
	
	Indeed, since $X_{\pi}$ is a smooth surface, the sheaf $\Omega^2_{X_{\pi}}$ is everywhere an $\mathcal{O}_{X_{\pi}}$ free module of rank $1$, and the pullback $(\pi \circ i)^{\ast}\Omega $ generates a sheaf of the form $\mathcal{J} (\pi) \Omega_{X_{\pi}}$, where $\mathcal{J}(\pi)$ is an ideal sheaf. 
	We now verify that $\mathcal{J}(\pi)$ is a principal ideal equivalent to $\mathcal{O}(-D)$; note that it is enough to verify this statement at each point $\pa \in E$. 
	We divide the proof in two parts depending on the nature of $\pa$ as follows.
	
	Suppose first that $\pa$ is a free point of $E_v$.
	There exists a local coordinate system $(x,y)$ centered at $\pa$ such that $E_v$ is locally equal to $(x=0)$. By Proposition \ref{thm:CaubelNemethiPopescu-Pampu2006}, there are two functions $f_1$ in $\mathcal{I}_1$ and $f_2$ in $\mathcal{I}_2$ such that, apart from a change of coordinates, $(f_1 \circ \pi) (x,y) = x^{\alpha_v} y$ and $(f_2\circ \pi)(x,y) = x^{\beta_v}$. It follows that
	\[
	\pi^{\ast} (df_1 \wedge df_2) = \beta_v x^{\alpha_v + \beta_v -1} dx \wedge dy.
	\]
	which implies that $\mathcal{J}(\pi)_{\pa} \supset (x^{\alpha_v + \beta_v -1})$; the other inclusion easily follows from the construction, so $\mathcal{J}(\pi)_{\pa} = (x^{\alpha_v + \beta_v -1})$.
	
	Suppose now that $\pa \in E_v \cap E_w$ is a double point of $E$. 
	There exists a local coordinate system $(x,y)$ centered at $\pa$ such that $E_v= (x=0)$ and $E_w=(y=0)$. By Proposition \ref{thm:CaubelNemethiPopescu-Pampu2006}, there are two functions $f_1 \in \mathcal{I}_1$ and $f_2 \in \mathcal{I}_2$ such that $(f_1 \circ \pi) (x,y) = x^{\alpha_v} y^{\alpha_w}U_1(x,y)$ and $(f_2\circ \pi)(x,y) = x^{\beta_v}y^{\beta_w}U_2(x,y)$ where $U_1(0) \neq 0$ and $U_2(0)\neq 0$. 
	Since $\alpha_v \beta_w - \alpha_w\beta_v \neq 0$, up to a change of coordinates we may suppose that $U_1 \equiv  U_2 \equiv 1$, so that
	\[
	\pi^{\ast} (df_1 \wedge df_2) = (\alpha_v \beta_w - \alpha_w\beta_v ) \, x^{\alpha_v + \beta_v -1} y^{\alpha_w + \beta_w -1} dx \wedge dy.
	\]
	which implies that $\mathcal{J}(\pi)_{\pa} \supset (x^{\alpha_v + \beta_v -1} y^{\alpha_v + \beta_v -1})$; the other inclusion follows easily from the construction, so that $\mathcal{J}(\pi)_{\pa} = (x^{\alpha_v + \beta_v -1} y^{\alpha_v + \beta_v -1})$.
	This concludes the proof.
\end{proof}

Fix a graph $\Gamma$, let us fix once and for all a divisor $D$ on $\Gamma$ given to us by Theorem~\ref{thm:sheaf_2-forms}, and write
\[
D = \sum_{v\in V(\Gamma)} \nu_vE_v.
\]
Note that, if $(X,0)$ is a normal surface singularity, $\pi \colon (X_{\pi},E) \to (X,0)$ is a good resolution of $(X,0)$ whose weighted dual graph is $\Gamma$, and $\Omega$ is the subsheaf of $\Omega^{2}_X$ given by Theorem~\ref{thm:sheaf_2-forms}, (so that the pullback $\pi^{\ast}(\Omega)$ generates a sheaf of the form $\mathcal{J}(\pi) \, \Omega_{X_{\pi}}^2$, where $\mathcal{J}(\pi)$ is a principal ideal equivalent to the basepoint-free line bundle $\mathcal{O}(-D)$), we also have 
\(
\nu_v = \ord_v(\pi^{\ast}(\Omega)) = \ord_v(\mathcal{J}(\pi)) .
\)
This leads to a natural extension of the definition of these invariants to a vertex $v$ of the dual graph $\Gamma_\pi$ of any good resolution $\pi'$ of $(X,0)$ that factors through $\pi$, by setting
\(
\nu_v = \ord_v(\pi'^{\ast}(\Omega)) = \ord_v(\mathcal{J}(\pi')) . 
\)
Note that this definition only depends on the combinatorics of the sequence of blowups required to pass from $\pi$ to $\pi$'.
This is made clear by the following lemma, and also explains how the Mather discrepancies behave after blowups.

\begin{lemma}\label{lem:bound}
	Let $\Gamma$ and $D$ be as above.
	Let $(X,0)$ be a normal surface singularity, let $\pi$ be a good resolution of $(X,0)$ whose weighted dual graph is $\Gamma$, let $\pi'$ be another good resolution that factors through $\pi$, and let $\pi'' = \pi' \circ \sigma$ be the composition of $\pi'$ with a blowup $\sigma \colon (X_{\pi''},E'') \to (X_{\pi'},E')$ centered at a closed point $\pa$ of $E'$.
	Write $\sigma^{-1}(\pa)=E_w$.
	Then we have:
	\begin{enumerate}
		\item if $\pa $ is a smooth point of $E'$ contained in the component $E_v$ we have
		\[
		\nu_w =  \nu_v+1 \quad\quad \text{and} \quad\quad \hat k_w = \hat k_v + 1 + \ord_{\pa}\big(\mathcal{R}_0(\pi')\big); 
		\]
		\item if $\pa$ is a double point of $E'$ lying at the intersection of two components $E_{v}$ and $E_{v'}$ we have
		\[
		\nu_w = \nu_{v} + \nu_{v'} +1 \quad\quad \text{and} \quad\quad \hat k_w = \hat k_v + \hat k_{v'} +1 + 
		\ord_{\pa}\big(\mathcal{R}_0(\pi')\big).
		\]
	\end{enumerate}
	
\end{lemma}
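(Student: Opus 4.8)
The plan is to prove both statements simultaneously by tracking how each of the two relevant sheaves transforms under a single point blowup $\sigma$, using the universal property that pullback commutes with composition: $\pi''^{\ast} = \sigma^{\ast} \circ \pi'^{\ast}$. Concretely, for the $\nu_w$ equalities I would work with the ideal sheaf $\mathcal{J}(\pi')$, which is principal and equivalent to $\mathcal{O}_{X_{\pi'}}(-D')$ where $D' = \sum_{v\in V(\Gamma_{\pi'})}\nu_v E_v$ by Theorem~\ref{thm:sheaf_2-forms} and the definition of the $\nu_v$. Since $\mathcal{J}(\pi')$ is locally principal, generated near $\pa$ by a single monomial in the local equations of the components of $E'$ through $\pa$ (namely $x^{\nu_v}$ at a free point of $E_v$, or $x^{\nu_v}y^{\nu_{v'}}$ at a double point of $E_v\cap E_{v'}$), pulling this back by $\sigma$ and reading off the order along the new exceptional component $E_w$ gives exactly $\nu_w = \nu_v + 1$ in the free case and $\nu_w = \nu_v + \nu_{v'} + 1$ in the double point case — this is just the elementary computation of how $\mathrm{ord}_{E_w}$ acts on monomials after blowing up, using that $\mathrm{ord}_{E_w}(x) = \mathrm{ord}_{E_w}(y) = 1$ for the strict/total transforms and that $\sigma^{\ast}$ of a function vanishing to order $a$ at the blown-up point vanishes to order $a$ along $E_w$. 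The cleanest way to phrase this: $\mathcal{J}(\pi'') = \sigma^{\ast}\mathcal{J}(\pi')$ because $\Omega^2_{X_{\pi''}} = \sigma^{\ast}\Omega^2_{X_{\pi'}}$ (both smooth surfaces, $\sigma$ a blowup of a point), so $\pi''^{\ast}\Omega$ generates $\sigma^{\ast}(\mathcal{J}(\pi')\,\Omega^2_{X_{\pi'}}) = (\sigma^{\ast}\mathcal{J}(\pi'))\,\Omega^2_{X_{\pi''}}$, and then $\nu_w = \mathrm{ord}_{E_w}(\sigma^{\ast}\mathcal{J}(\pi'))$ is computed as above.

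For the Mather discrepancies $\hat k_w$ the argument is parallel but applied to the $0$-Fitting ideal $\mathcal{F}_0$ rather than to $\mathcal{J}$. The same sheaf-theoretic identity gives $\mathcal{F}_0(\pi'') = \sigma^{\ast}\mathcal{F}_0(\pi')$, since $\mathcal{F}_0(\pi')$ is by definition the ideal such that $\pi'^{\ast}\Omega^2_X$ generates $\mathcal{F}_0(\pi')\,\Omega^2_{X_{\pi'}}$, and applying $\sigma^{\ast}$ and using $\Omega^2_{X_{\pi''}} = \sigma^{\ast}\Omega^2_{X_{\pi'}}$ yields the corresponding statement for $\pi''$. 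Now I would invoke the residual decomposition \eqref{eq:Residual}: locally at $\pa$, $\mathcal{F}_0(\pi')_{\pa} = \mathcal{R}_0(\pi')_{\pa}\cdot x^{\hat k_v}$ at a free point of $E_v$, or $\mathcal{R}_0(\pi')_{\pa}\cdot x^{\hat k_v} y^{\hat k_{v'}}$ at a double point. Pulling back by $\sigma$ and computing $\mathrm{ord}_{E_w}$: the monomial factor contributes $\hat k_v + 1$ (free case) or $\hat k_v + \hat k_{v'} + 1$ (double case) exactly as in the $\nu$ computation, while the residual factor $\mathcal{R}_0(\pi')_{\pa}$ contributes precisely $\mathrm{ord}_{\pa}(\mathcal{R}_0(\pi'))$ — this is the definition of the order of an ideal sheaf at a point in terms of the order along the exceptional divisor of its blowup. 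Summing gives the stated formulas.

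The one point requiring genuine care is establishing the sheaf identity $\mathcal{F}_0(\pi'') = \sigma^{\ast}\mathcal{F}_0(\pi')$ (and likewise for $\mathcal{J}$), i.e.\ that forming the $0$-Fitting ideal of the image of $\pi^{\ast}\Omega^2_X$ genuinely commutes with the further blowup $\sigma$. This holds because $\sigma$ is a morphism of \emph{smooth} surfaces, so $\sigma^{\ast}\Omega^2_{X_{\pi'}} \to \Omega^2_{X_{\pi''}}$ is an isomorphism of invertible sheaves (equivalently $K_{X_{\pi''}} = \sigma^{\ast}K_{X_{\pi'}} + E_w$, but the relevant comparison is at the level of the pullback map, which is an iso away from $E_w$ and whose only zero along $E_w$ is absorbed into the ideal), and pullback of modules is right-exact, so the subsheaf generated by a pulled-back generating set is the pullback of the subsheaf generated by the original set; taking $0$-Fitting ideals (which for a quotient of a line bundle by an image just records the generated subsheaf) then commutes with $\sigma^{\ast}$. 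The remaining verifications — that $(f_1\circ\pi')(x,y)$ and $(f_2\circ\pi')(x,y)$ have the claimed monomial form, that $\mathrm{ord}_{E_w}$ of $\sigma^{\ast}$ of a monomial $x^a y^b$ equals $a+b$ at a double point and $a$ plus the order of the transverse coordinate at a free point, and that the residual ideal pulls back with the expected order — are all local monomial computations of the kind already carried out in the proof of Proposition~\ref{thm:CaubelNemethiPopescu-Pampu2006}(ii) and in the discussion of $\mathcal{R}_0(\pi)$ preceding Theorem~\ref{thm:sheaf_2-forms}, and I would treat them briefly. Finally I would remark that the formulas show $\nu_w$ depends only on the combinatorial type of the blowup (free vs.\ double point) and on the values of $\nu$ at the adjacent vertices, confirming the assertion preceding the lemma that the extension of $\nu_v$ to resolutions factoring through $\pi$ is combinatorially determined.
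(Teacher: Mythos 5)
Your overall strategy (localize at $\pa$, use principality of $\mathcal{J}(\pi')$ and the residual decomposition $\mathcal{F}_0(\pi')_{\pa}=\mathcal{R}_0(\pi')_{\pa}\cdot x^{\hat k_v}$ or $\mathcal{R}_0(\pi')_{\pa}\cdot x^{\hat k_v}y^{\hat k_{v'}}$, then track orders along $E_w$) is the same as the paper's, and your treatment of the residual factor, which contributes exactly $\ord_{\pa}\big(\mathcal{R}_0(\pi')\big)$, is fine. But the step you yourself flag as "the one point requiring genuine care" is wrong as stated, and the error is not cosmetic: the natural map $\sigma^{\ast}\Omega^2_{X_{\pi'}}\to\Omega^2_{X_{\pi''}}$ is \emph{not} an isomorphism of invertible sheaves. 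It is injective, an isomorphism off $E_w$, and vanishes to order one along $E_w$ (equivalently $K_{X_{\pi''}}=\sigma^{\ast}K_{X_{\pi'}}+E_w$; concretely $\sigma^{\ast}(dx\wedge dy)=\tilde x\,d\tilde x\wedge d\tilde y$ in the chart $(x,y)=(\tilde x,\tilde x\tilde y)$). Consequently the identities $\mathcal{J}(\pi'')=\sigma^{\ast}\mathcal{J}(\pi')$ and $\mathcal{F}_0(\pi'')=\sigma^{\ast}\mathcal{F}_0(\pi')$ are false; the correct relations are $\mathcal{J}(\pi'')=\big(\sigma^{-1}\mathcal{J}(\pi')\cdot\mathcal{O}_{X_{\pi''}}\big)\,\mathcal{O}_{X_{\pi''}}(-E_w)$ and likewise for $\mathcal{F}_0$, the extra factor being precisely the Jacobian of $\sigma$.

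This matters because that Jacobian factor is the sole source of the "$+1$" in all four formulas of the lemma. If one takes your stated identity at face value and computes $\nu_w=\ord_{E_w}\big(\sigma^{\ast}\mathcal{J}(\pi')\big)$, the monomial $x^{\nu_v}$ (resp.\ $x^{\nu_v}y^{\nu_{v'}}$) pulls back with order $\nu_v$ (resp.\ $\nu_v+\nu_{v'}$) along $E_w$, so you would get $\nu_w=\nu_v$ and $\hat k_w=\hat k_v+\ord_{\pa}\big(\mathcal{R}_0(\pi')\big)$, contradicting the statement you are proving; your assertion that "the monomial factor contributes $\hat k_v+1$" does not follow from the identity you invoke. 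Your parenthetical remark that the zero along $E_w$ is "absorbed into the ideal" is the correct fix, but it contradicts the identity relied upon two sentences earlier, so as written the argument is internally inconsistent exactly at the decisive point. The repair is either to state the corrected identity with the $\mathcal{O}_{X_{\pi''}}(-E_w)$ factor and then do the monomial bookkeeping, or to do what the paper does: pull back the $2$-forms themselves, e.g.\ $\omega=x^{\alpha}y^{\beta}f\,dx\wedge dy$ with $f$ a generator (or $1$), and read off $\ord_w\big(\sigma^{\ast}\omega\big)=\alpha+\beta+1+\ord_{\pa}(f)$ directly from the chart computation, where the $+1$ appears automatically from $d(\tilde x\tilde y)$.
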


\begin{proof}
We divide the proof in two cases depending on the nature of $\pa \in E'$ and perform two computations similar to the ones of \cite[$\S$2.4]{BelottodaSilvaBierstoneGrandjeanMilman2017}.	
Suppose first that $\pa$ is a free point of $E'$. 
Then there exists a local coordinate system $(x,y)$ centered at $\pa$ such that $E_v$ is defined locally by $(x=0)$. 
Consider a 2-form germ $\omega$ at $\pa$, and write \(\omega = x^{\alpha} f(x,y) dx \wedge dy\). 
Note that the order of the pullback form $\sigma^{\ast}(\omega)$ over $E_w$ may be computed at any general point of $E_w$. 
We consider the origin of the $x$-chart $(x,y) = (\tilde{x},\tilde{x}\tilde{y})$, where we obtain
\[
\begin{aligned}
\sigma^{\ast}(\omega) &= \tilde{x}^{\alpha} f(\tilde{x},\tilde{x}\tilde{y}) \, d\tilde{x}  \wedge d(\tilde{x}\tilde{y})\\
& = \tilde{x}^{\alpha+1 + \ord_{\pa}(f)} \tilde{f}(\tilde{x},\tilde{y})\, d\tilde{x} \wedge d\tilde{y},
\end{aligned}
\]
where $\tilde{f}(\tilde{x},\tilde{y})$ is such that $\tilde{f}(0,\tilde{y}) \not\equiv 0$ and $E_w = (\tilde{x}=0)$. It follows that 
\[
\ord_w\big(\sigma^{\ast}(\omega)\big) = \alpha + 1 + \ord_{\pa}(f).
\]
Now, note that $\mathcal{J}(\pi')$ is a principal ideal, so that $\mathcal{J}(\pi')_{\pa} = (x^{\nu_v})$. 
In other words, the differential form \(\omega = x^{\nu_v} dx \wedge dy\) belongs to the subsheaf generated by the pullback $\pi^{\ast}(\Omega)$, and we easily conclude that \(\nu_w = \nu_v+1.\) 
Finally, note that $\mathcal{F}_0(\pi')_{\pa} = x^{\hat{k}_v}(f_1,\ldots,f_k)$, where $f_1,\ldots,f_k$ are generators of $\mathcal{R}_{0}(\pi')_{\pa}$. 
In particular, the differential forms \(\omega_i = x^{\hat{k}_v} f_i(x,y) dx \wedge dy\) belongs to the subsheaf generated by the pullback $\pi^{\ast}(\Omega_{X}^2)$, and we easily conclude that \(\hat k_w = \hat{k}_v + 1 + \ord_{\pa}(\mathcal{R}_0(\pi')).\) 

Suppose now that $\pa \in E_v \cap E_{v'}$ is a double point of $E'$.
Then there exists a local coordinate system $(x,y)$ centered at $\pa$ such that $E_v =(x=0)$ and $E_{v'} = (y=0)$. 
Consider a 2-form germ $\omega$ at $\pa$, and write \(\omega = x^{\alpha} y^{\beta} f(x,y) dx \wedge dy\). 
Note that the order of the pullback form $\sigma^{\ast}(\omega)$ over $E_w$ may be computed at any generic point of $E_w$. 
We consider the origin of the $x$-chart $(x,y) = (\tilde{x},\tilde{x}\tilde{y})$, where we get
\[
\begin{aligned}
\sigma^{\ast}(\omega) &= \tilde{x}^{\alpha} (\tilde{x}\tilde{y})^{\beta} f(\tilde{x},\tilde{x}\tilde{y}) \, d\tilde{x}  \wedge d(\tilde{x}\tilde{y})\\
& = \tilde{x}^{\alpha+\beta+1 + \ord_{\pa}(f)} \tilde{y}^{\beta}\tilde{f}(\tilde{x},\tilde{y})\, d\tilde{x} \wedge d\tilde{y}
\end{aligned}
\]
where $\tilde{f}(\tilde{x},\tilde{y})$ is such that $\tilde{f}(0,\tilde{y}) \not\equiv 0$ and $E_w = (\tilde{x}=0)$. 
It follows that 
\[
\ord_w\big(\sigma^{\ast}(\omega)\big) = \alpha + \beta + 1 + \ord_{\pa}(f).
\]
Now, note that $\mathcal{J}(\pi')$ is a principal ideal, so that $\mathcal{J}(\pi')_{\pa} = (x^{\nu_v} y^{\nu_{v'}})$. 
In other words, the differential form \(\omega = x^{\nu_v} y^{\nu_{v'}} dx \wedge dy\) belongs to the subsheaf generated by the pullback $\pi^{\ast}(\Omega)$, and we easily conclude that \(\nu_w = \nu_v+ \nu_{v'}+1.\)
Finally, note that $\mathcal{F}_0(\pi')_{\pa} = x^{\hat{k}_v}y^{\hat{k}_{v'}}(f_1,\ldots,f_k)$, where $f_1,\ldots,f_k$ are generators of $\mathcal{R}_{0}(\pi')_{\pa}$. 
In particular, the differential forms \(\omega_i = x^{\hat{k}_v} y^{\hat{k}_{v'}} f_i(x,y) dx \wedge dy\) belong to the subsheaf generated by the pullback $\pi^{\ast}(\Omega_{X}^2)$, and thus \(\hat k_w = \hat k_v + 1 + \ord_{\pa}(\mathcal{R}_0(\pi')).\)
\end{proof}

Observe that, while the divisor $D$ that we have chosen, and therefore the integers $\nu_v$, only depend on the graph $\Gamma$ and not on the choice of $(X,0)$ and $\pi$, the sheaf $\mathcal F_0(\pi)$, and therefore the integers $\hat k_v$, depends on the analytic structure of $(X,0)$.
However, as a consequence of Theorem~\ref{thm:sheaf_2-forms} we deduce that $\hat k_v$ is bounded from above by the topological invariant $\nu_v$:

\begin{lemma}\label{lem:compatibility}
	Let $\pi' \colon (X_{\pi'},E') \to (X,0)$ be a good resolution of $(X,0)$ which factors through $\pi \colon (X_{\pi},E) \to (X,0)$. Then for every vertex $v$ of $\Gamma_{\pi'}$ we have
	\[
	\hat{k}_v \leq \nu_v.
	\]
	Moreover, if this inequality is an equality for some vertex $v$ of $\Gamma_{\pi'}$, then the family of the polar curves of the generic plane projections of $(X,0)$ has no basepoint at a free point of $E_{v}$, and if all inequalities for all vertices $v$ are equalities then the family of the polar curves of the generic plane projections of $(X,0)$ has no basepoint at all.
\end{lemma}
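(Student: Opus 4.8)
The plan is to compare the two ideal sheaves $\mathcal J(\pi')$ and $\mathcal F_0(\pi')$ directly, to deduce the inequality $\hat k_v\leq\nu_v$ by looking at orders along $E_v$, and then to read off the equality cases from the behaviour of the residual ideal $\mathcal R_0(\pi')$ via the description of $\mathcal R_0$ in terms of the generic polar curves recalled before the statement.

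For the inequality, note that by Theorem~\ref{thm:sheaf_2-forms} the sheaf $\Omega$ is a subsheaf of $\Omega^2_X$, so every local section of $\Omega^2_{X_{\pi'}}$ of the form $\pi'^*\omega$ with $\omega$ a local section of $\Omega$ is in particular of the form $\pi'^*\omega$ with $\omega$ a local section of $\Omega^2_X$. Hence the $\mathcal O_{X_{\pi'}}$-submodule of $\Omega^2_{X_{\pi'}}$ generated by the image of $\pi'^*\Omega$ is contained in the one generated by the image of $\pi'^*\Omega^2_X$; no exactness of $\pi'^*$ is needed here, only this inclusion of images inside the line bundle $\Omega^2_{X_{\pi'}}$. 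In other words $\mathcal J(\pi')\,\Omega^2_{X_{\pi'}}\subseteq\mathcal F_0(\pi')\,\Omega^2_{X_{\pi'}}$, and since $\Omega^2_{X_{\pi'}}$ is invertible this gives $\mathcal J(\pi')\subseteq\mathcal F_0(\pi')$. Taking orders along $E_v$, which can only decrease when the ideal grows, yields $\hat k_v=\ord_v\!\big(\mathcal F_0(\pi')\big)\leq\ord_v\!\big(\mathcal J(\pi')\big)=\nu_v$.

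For the equality cases I would use the local forms of the two ideals along the components of $E'$. Recall (as in the proof of Theorem~\ref{thm:sheaf_2-forms} and in the proof of Lemma~\ref{lem:bound}) that $\mathcal J(\pi')$ is principal, with $\mathcal J(\pi')_\pa=(x^{\nu_v})$ at a free point $\pa$ of $E_v$ and $\mathcal J(\pi')_\pa=(x^{\nu_v}y^{\nu_{v'}})$ at a double point $\pa\in E_v\cap E_{v'}$, where $x$, resp.\ $x,y$, are local equations of the components through $\pa$; on the other hand $\mathcal F_0(\pi')_\pa=x^{\hat k_v}\,\mathcal R_0(\pi')_\pa$, resp.\ $\mathcal F_0(\pi')_\pa=x^{\hat k_v}y^{\hat k_{v'}}\,\mathcal R_0(\pi')_\pa$, by the definition~\eqref{eq:Residual} of the residual ideal. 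Plugging these into the inclusion $\mathcal J(\pi')\subseteq\mathcal F_0(\pi')$ gives, at a free point of $E_v$, that $x^{\nu_v-\hat k_v}\in\mathcal R_0(\pi')_\pa$. If $\hat k_v=\nu_v$ this forces $\mathcal R_0(\pi')_\pa=\mathcal O_{X_{\pi'},\pa}$; since $\mathcal R_0(\pi')_\pa$ is the defining ideal of the strict transforms via $\pi'$ of the polar curves of the generic plane projections passing through $\pa$, this means no such curve passes through $\pa$, so $\pa$ is not a basepoint of the family. Letting $\pa$ range over the free points of $E_v$ gives the first part of the ``moreover'' statement. If $\hat k_v=\nu_v$ for \emph{every} vertex $v$ of $\Gamma_{\pi'}$, the same computation gives $\mathcal R_0(\pi')_\pa=\mathcal O_{X_{\pi'},\pa}$ at every free point, and at every double point it gives $1\in\mathcal R_0(\pi')_\pa$, hence $\mathcal R_0(\pi')_\pa=\mathcal O_{X_{\pi'},\pa}$ there too. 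Thus $\mathcal R_0(\pi')$ is trivial, which is equivalent to $\mathcal F_0(\pi')$ being principal and hence, as recalled above, to $\pi'$ factoring through the Nash transform of $(X,0)$; by the characterization drawn from \cite[III, Theorem~1.2]{Spivakovsky1990} this is precisely the statement that the family of the polar curves of the generic plane projections of $(X,0)$ has no basepoint.

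The main obstacle, such as it is, lies in the two transitions that are not pure bookkeeping: establishing $\mathcal J(\pi')\subseteq\mathcal F_0(\pi')$ cleanly from $\Omega\subseteq\Omega^2_X$ — one must resist pulling back the inclusion directly, since $\pi'^*$ is not left exact, and argue instead with the images inside the invertible sheaf $\Omega^2_{X_{\pi'}}$ — and the identification of the vanishing of $\mathcal R_0(\pi')$ with the absence of basepoints of the polar family, which is exactly the content of the discussion of $\mathcal R_0(\pi)$ and generic polar curves preceding Theorem~\ref{thm:sheaf_2-forms}. Everything else reduces to comparing orders of principal monomial ideals at the free and double points of $E'$.
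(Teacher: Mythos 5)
Your proof is correct and follows essentially the same route as the paper: the inequality comes from the inclusion $\mathcal{J}(\pi')\subseteq\mathcal{F}_0(\pi')$ induced by $\Omega\subseteq\Omega^2_X$ inside the invertible sheaf $\Omega^2_{X_{\pi'}}$, and the equality cases come from the principality of $\mathcal{J}(\pi')$ together with the interpretation of $\mathcal{R}_0(\pi')$ in terms of strict transforms of generic polar curves. The only difference is cosmetic: the paper phrases the equality case as the coincidence of the localizations $\mathcal{J}(\pi')_{\pa}=\mathcal{F}_0(\pi')_{\pa}$ and then cites the criteria of Spivakovsky and of the Fitting-ideal characterization, whereas you unwind this explicitly as triviality of $\mathcal{R}_0(\pi')_{\pa}$ at the relevant free (resp.\ all) points, which is the same content.
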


\begin{proof}
	Let us denote by $\mathcal{J}(\pi') \, \Omega_{X_{\pi'}}$ and $\mathcal{F}_0(\pi')\, \Omega_{X_{\pi'}}$ the subsheaves generated by the pullback of $\Omega$ and $\Omega_{X}^2$ respectively. Since $\Omega \subset \Omega_X^2$, we conclude that $\mathcal{J}(\pi') \subset \mathcal{F}_0(\pi')$, which implies the desired inequality.
	Now, suppose that the inequality is an equality over a vertex $v\in \Gamma_{\pi'}$.
	Since $\pi'$ factors through $\pi$, and $\mathcal{J}(\pi)$ is a principal ideal, we conclude that $\mathcal{J}(\pi')$ is also a principal ideal (indeed, $\mathcal{J}(\pi')$ is the pullback of $\mathcal{J}(\pi)$ multiplied by the Jacobian determinant of a sequence of point blowups, see Lemma~\ref{lem:bound}).
	It therefore follows that for all free point $\pa \in E_v$, the localizations $\mathcal{J}(\pi')_{\pa} = \mathcal{F}_0(\pi')_{\pa}$ coincide. Moreover, if the inequality is an equality over two vertices $v, \, v'\in \Gamma_{\pi'}$ that are connected by an edge, then the localizations $\mathcal{J}(\pi')_{\pa} = \mathcal{F}_0(\pi')_{\pa}$ coincide over any point $\pa \in E_v \cap E_{v'}$.
	We conclude by the results \cite[III, Theorem 1.2]{Spivakovsky1990} and \cite[Theorem 2.5]{BelottodaSilvaBierstoneGrandjeanMilman2017} already discussed above.
\end{proof}


\section{Proof of Theorem~\ref{thm:main}}
\label{sec:proof_thm_A}

The goal of this section is to complete the proof of our main result, Theorem~\ref{thm:main}.

\medskip

As we can rely on Proposition~\ref{prop:weaker_version_main_theorem}, we now turn our attention to the polar invariants, and we can assume without loss of generality that $\pi\colon X_\pi \to X$ is a good resolution of the normal surface singularity $(X,0)$ which factors through the blowup of its maximal ideal, and that the $\cal{L}$-vector $L=\{l_v\}_{v\in V(\Gamma_\pi)}$ is given. By the L\^e--Greuel--Teissier formula \cite[Theorem~5.1.1]{LeTeissier1981} (see also \cite[Proposition~5.1]{BelottodaSilvaFantiniPichon2019}), the multiplicity $m(\Pi,0)$ of the polar curve $\Pi$ of a generic projection $\ell\colon (X, 0) \to (\C^2, 0)$, is equal to $m(X,0)-\chi(F_t)$, where $F_t$ denotes the Milnor--L\^e fiber of a generic linear form on $(X,0)$. 
Now, for each vertex $v$ of $V(\Gamma_{\pi})$, let $N(E_v)$ be a small tubular neighborhood of the corresponding component $E_v$ of $\pi^{-1}(0)$ obtained as the total space of a normal disc bundle on $E_v$, and set
\[
\cal{N}(E_v) = \overline{N(E_v) \setminus  \bigcup_{E_w\neq E_v} N(E_w)}.
\]
Then by additivity of the Euler characteristic, we can compute $\chi(F_t)$ as
\begin{align*}
\chi(F_t) & = \sum_{v\in V(\Gamma_{\pi})}\chi\big(\cal N(E_v) \cap F_t\big) =  \sum_{v\in V(\Gamma_{\pi})}m_v\big(\chi\big(\cal N(E_v)\cap E_v\big)-l_v\big) \\
& =\sum_{v\in V(\Gamma_{\pi})}m_v\big(2-2g(E_v)-\val_{\Gamma_{\pi}}(v)-l_v\big),
\end{align*}
where the second equality makes use of the fact that $\pi$ factors through the blowup of the maximal ideal of $(X,0)$, and therefore the strict transform via $\pi$ of a generic hyperplane section of $(X,0)$ consists of $l_v$ curvettes on each component $E_v$.	
This shows that $\chi(F_t)$ only depends on the weighted graph $\Gamma_{\pi}$ and on the $\cal L$-vector $L$, so that we obtain an explicit bound for $m(\Pi,0)$ as well.

Observe that what we have proven so far suffices to deduce Corollary~\ref{cor:main}, since $m(\Pi,0)$ equal to the sum $\sum_{v\in V(\Gamma_{\pi})}m_vp_v$ and so the value $p_v$ for any vertex $v$ of $\Gamma_{\pi}$ is bounded as well.
What is sensibly harder to show, and requires the invariants introduced in Section~\ref{sec:kahler}, is the fact that the topology of $(X,0)$ determines a finite family of dual graphs for the minimal resolution factoring through the Nash blowup of $(X.0)$.
In order to do this, we introduce an auxiliary numerical invariant based on the following local version of the invariants $p_v$.
Following the notation of the previous section, given a closed point $\pa$ of $E$ pick $\cal D$ in $\Omega_\pa$, consider the associated curve germ $(\Pi_{\cal D, \pa}, 0)$, and set $p_v(\pa) = \Pi^*_{\cal D, \pa} \cdot E_v$, where $^*$ denotes the strict transform under $\pi$.
Then, since $\Pi_{\cal D, \pa}$ is nonempty if and only if $\pa$ is a basepoint of the family of polar curves $(\Pi_{\cal D})_{\cal D \in \Omega}$, we deduce that $p_v(\pa) \neq 0$ if and only if $\pa$ is a basepoint which belongs to $E_v$.

\begin{definition}
Given a point $\pa \in E$, consider the quadruple of integers
\[
\mathrm{Aux}(\pa)=\big(m(\Pi_{\cal D, \pa}), \epsilon_1(\pa),\epsilon_2(\pa),\beta(\pa)\big)
\]
whose last three entries are defined as
\begin{align*}
\epsilon_1(\pa) = \sum_{v}p_v(\pa),
 \quad \quad
\epsilon_2(\pa) = \max_v\big\{p_v(\pa),p_w(\pa)\big\},
 \quad \quad
\beta(\pa)= \max_v\big\{\nu_{v} - \hat k_v\big\},
\end{align*}
where the sum and the two maxima run over the set of vertices $v$ of $\Gamma_\pi$ such that $\pa$ belongs to $E_v$.
\end{definition}

In the rest of the section we consider the invariants $\Aux(\pa)$ as elements of $\Z^4$ equipped with the lexicographic order.
Note that all the entries of $\Aux(\pa)$ are positive integers: this is immediate for the first three, and a direct consequence of Lemma~\ref{lem:compatibility} for $\beta(\pa)$.
Moreover, observe that we have
\begin{equation}\label{eq:top_bound_for_Aux}
\Aux(\pa) \leq \big(m(\Pi,0), m(\Pi,0), m(\Pi,0), \max \{\nu_v\,|\,v\in V(\Gamma)\} \big),
\end{equation}
that is the invariant $\Aux(\pa)$ is bounded from above by a quadruple
which does not depend on the choice of $(X,0)$ and $\pi$ but only on the topology of $(X,0)$.

We are now ready to see that, as a consequence of the computations of Lemma~\ref{lem:bound}, the auxiliary invariant always drops after a blowup.

\begin{lemma}
\label{lem:induction}
Let $\pi\colon (X_{\pi},E) \to (X,0)$ be a good resolution of $(X,0)$ that factors through the blowup of its maximal ideal, let $\pa$ be a closed point of $E$ such that $m(\Pi_{\cal D, \pa})  \neq 0$, and let $\pi' = \pi \circ \sigma$ be the composition of $\pi$ with the blowup $\sigma \colon (X_{\pi'},E') \to (X_{\pi},E)$ with center $\pa$.
Then, for every closed point $\pb$ of $\sigma^{-1}(\pa)$ we have
\[
\mathrm{Aux}(\pb) < \mathrm{Aux}(\pa).
\]
\end{lemma}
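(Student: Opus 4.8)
The invariant $\Aux(\pa)$ is a quadruple ordered lexicographically, so I want to show that as we pass from $\pa$ to any $\pb \in \sigma^{-1}(\pa) = E_w$, the first coordinate $m(\Pi_{\cal D,\pa})$ is non-increasing, and that whenever it stays constant the triple of remaining coordinates strictly drops. The key input is that the polar curve family behaves well under blowup: since $\pa$ is a basepoint of the family of polar curves $(\Pi_{\cal D})_{\cal D \in \Omega}$ (this is exactly the hypothesis $m(\Pi_{\cal D,\pa}) \neq 0$, via the discussion in Section~\ref{sec:kahler} relating basepoints to nonemptiness of $\Pi_{\cal D,\pa}$), blowing up $\pa$ either resolves the polar curve at $\pa$ (so no point of $E_w$ is a basepoint and $\Aux(\pb)$ has first entry $0 < m(\Pi_{\cal D,\pa})$, done) or distributes the strict transform among the new points of $E_w$.

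\textbf{First coordinate.} First I would observe that $m(\Pi_{\cal D,\pb})$, the number of branches of $\Pi_{\cal D}$ through $\pb$, is at most $m(\Pi_{\cal D,\pa})$, since the branches of $\Pi_{\cal D}$ through $\pb$ form a subset of those through $\pa$ (their strict transforms under $\sigma$ pass through $\pb \in E_w$, hence previously passed through $\pa$). If this inequality is strict we are done immediately. So assume equality: every branch of $\Pi_{\cal D,\pa}$ has strict transform through $\pb$, and moreover $\pb$ must then be the unique such point of $E_w$, and the branches must all be mutually tangent with a common tangent direction at $\pa$ corresponding to $\pb$.

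\textbf{Remaining coordinates in the equality case.} Here is where Lemma~\ref{lem:bound} does the work. Write $p_v(\pa)$ for the local hyperplane-type intersection numbers of the polar strict transform. For $\epsilon_1(\pb) = \sum_u p_u(\pb)$: the strict transform of $\Pi_{\cal D,\pa}$ under $\sigma$ has total intersection with $E_w$ equal to $(\sigma^*\Pi_{\cal D,\pa} - m(\Pi_{\cal D,\pa})E_w)\cdot E_w$ type computation, and the point is that passing to strict transforms drops total multiplicity by the multiplicity $m(\Pi_{\cal D,\pa}) \geq 1$ of the curve at $\pa$; a careful bookkeeping (distinguishing the free-point case $\pa \in E_v$ from the double-point case $\pa \in E_v \cap E_{v'}$, exactly as in Lemma~\ref{lem:bound}) shows $\epsilon_1(\pb) \leq \epsilon_1(\pa)$, with the inequality strict unless the configuration is quite rigid; in the remaining rigid case one then checks $\epsilon_2(\pb) < \epsilon_2(\pa)$ by the same local computation, since $\epsilon_2$ records the maximal $p_u(\pa)$ among components through the point and this max is spread out by the blowup. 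Finally, for $\beta(\pb) = \max_u\{\nu_u - \hat k_u\}$: subtracting the two formulas in Lemma~\ref{lem:bound} gives, in the free-point case, $\nu_w - \hat k_w = (\nu_v - \hat k_v) - \ord_\pa(\mathcal R_0(\pi))$, and in the double-point case $\nu_w - \hat k_w = (\nu_v - \hat k_v) + (\nu_{v'} - \hat k_{v'}) - \ord_\pa(\mathcal R_0(\pi))$; since $\pa$ is a basepoint of the polar family, $\ord_\pa(\mathcal R_0(\pi)) \geq 1$ by the identification of $\mathcal R_0(\pi)_\pa$ with the local polar family in Section~\ref{sec:kahler}, so $\nu_w - \hat k_w$ is strictly smaller than the relevant $\nu_v - \hat k_v$ (or sum), and a short argument shows no \emph{other} component $E_u$ through $\pb$ can have $\nu_u - \hat k_u$ as large as $\beta(\pa)$, giving $\beta(\pb) < \beta(\pa)$.

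\textbf{Main obstacle.} The delicate part is the middle two coordinates $\epsilon_1, \epsilon_2$ in the borderline case where $m(\Pi_{\cal D,\pb}) = m(\Pi_{\cal D,\pa})$: one must verify that the lexicographic triple $(\epsilon_1,\epsilon_2,\beta)$ genuinely drops, which requires an honest analysis of how the intersection numbers $p_u$ redistribute under the blowup and why at least one of the three must strictly decrease — the cleanest route is to treat $\beta$ as a safety net, proving $\beta(\pb) < \beta(\pa)$ unconditionally (this follows purely from $\ord_\pa(\mathcal R_0(\pi)) \geq 1$ together with the monotonicity $\hat k_v \leq \nu_v$ of Lemma~\ref{lem:compatibility}), so that even if $\epsilon_1$ and $\epsilon_2$ happen to stay constant the quadruple still drops in its last coordinate. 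With that observation the proof reduces to: (a) $m(\Pi_{\cal D,\pb}) \leq m(\Pi_{\cal D,\pa})$ always, and (b) $\beta(\pb) < \beta(\pa)$ always — and (b) is immediate from Lemma~\ref{lem:bound} once one knows $\ord_\pa(\mathcal R_0(\pi)) \geq 1$, which holds precisely because $m(\Pi_{\cal D,\pa}) \neq 0$.
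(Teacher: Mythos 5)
Your reduction of the lemma to the two claims ``(a) the first entry is non-increasing and (b) $\beta(\pb)<\beta(\pa)$ always'' does not work, for two independent reasons. First, claim (b) is false in general: the $\beta$-drop you compute is only valid when \emph{both} $\pa$ and $\pb$ are free points. If $\pa\in E_v\cap E_{v'}$ is a double point, Lemma~\ref{lem:bound}(ii) gives $\nu_w-\hat k_w=(\nu_v-\hat k_v)+(\nu_{v'}-\hat k_{v'})-\ord_{\pa}\big(\mathcal{R}_0(\pi)\big)$, which can be strictly larger than $\beta(\pa)=\max\{\nu_v-\hat k_v,\nu_{v'}-\hat k_{v'}\}$ (take both differences positive and $\ord_{\pa}(\mathcal{R}_0(\pi))=1$); and even when $\pa$ is free, if $\pb$ is the double point $E_v\cap E_w$ then $\beta(\pb)\geq\nu_v-\hat k_v$, which may equal $\beta(\pa)$, so your ``short argument'' that no other component through $\pb$ realizes the old maximum cannot exist. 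Second, even granting (a) and (b), the conclusion does not follow for the lexicographic order: if the first entry ties, you must also show that $\epsilon_1$ does not increase, and that $\epsilon_2$ does not increase when $\epsilon_1$ ties; ``even if $\epsilon_1$ and $\epsilon_2$ happen to stay constant'' is precisely the point that has to be proved, not assumed, and your sketch explicitly defers that bookkeeping.

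The paper's proof is exactly that bookkeeping, organized in four cases according to whether $\pa$ and $\pb$ are free or double points. It first reduces (as you do) to a single basepoint $\pb$ on $E_w$ with $m(\Pi_{\cal D,\pb})=m(\Pi_{\cal D,\pa})$, then uses the identity $m(\Pi_{\cal D,\pa},0)=\sum_v m_v\,p_v(\pa)$ (valid because $\pi$ factors through the blowup of the maximal ideal) together with the multiplicity relations $m_w=m_v$ (free center) and $m_w=m_{v_1}+m_{v_2}$ (double center) to show: $\epsilon_1$ strictly drops whenever $\pa$ is a double point; $\epsilon_1$ ties and $\epsilon_2$ strictly drops when $\pa$ is free and $\pb$ is double; and only in the free-to-free case does it invoke $\beta$, where Lemma~\ref{lem:bound}(i) gives $\beta(\pb)=\nu_w-\hat k_w=\beta(\pa)-\ord_{\pa}\big(\mathcal{R}_0(\pi)\big)<\beta(\pa)$, using $\ord_{\pa}(\mathcal{R}_0(\pi))>0$ at a basepoint and $\beta(\pa)>0$ from Lemma~\ref{lem:compatibility}. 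So the case analysis you hoped to avoid is the actual content of the lemma; your proposal as stated has a genuine gap.
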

\begin{proof}
Set $E_w=\sigma^{-1}(\pa)$.
Since $\pi$ factors through the blowup of the maximal ideal of $(X,0)$, the multiplicity $m(\Pi_{\cal D, \pa},0)$ of $\Pi_{\cal D, \pa}$ at $0$ can be computed as the sum
\(
m(\Pi_{\cal D, \pa},0) = \sum_{v \in V(\Gamma)} m_v p_v(\pa).
\)
If $\pb_1, \ldots,\pb_r$ are the basepoints of   $(\Pi_{\cal D})_{\cal D \in \Omega}$ on $E_w$,  then  for all $i = 1,\ldots, r,$ we have $\Omega_{\pb_i} \subset \Omega_{\pa}$ and thus $(\Pi_{\cal D, \pb_i}, 0) \subset (\Pi_{\cal D, \pa}, 0)$.
Moreover, for all $\cal D \in \bigcap_{i=1}^r \Omega_{\pb_i}$ and all $i, j$ with $i \neq j$, the curve germs $(\Pi_{\cal D, \pb_i}, 0)$ and $(\Pi_{\cal D, \pb_j}, 0)$ have no irreducible components in common, which implies that
\(
m(\Pi_{\cal D, \pa},0) \geq  \sum_{i=1}^r m(\Pi_{\cal D, \pb_i},0).
\)
It follows that if $r>1$, then $ m(\Pi_{\cal D, \pb_i}) <  m(\Pi_{\cal D, \pa})$ for every $i=1,\ldots,r$. We may therefore suppose that there is an unique closed point $\pb$ of $E_w$ which is a basepoint for the family of polar curves, that is that $r=1$. 
Moreover, if $ m(\Pi_{\cal D, \pb}) <  m(\Pi_{\cal D, \pa})$ then there is nothing to prove, and so we may further suppose that $m(\Pi_{\cal D, \pb}) =  m(\Pi_{\cal D, \pa})$. We now divide the proof in four parts, depending on the nature of $\pa$ and $\pb$.

First, suppose that $\pa$ is a double point belonging to $E_{v_1} \cap E_{v_2}$ and that $\pb$ is a free point of $E_w$. 
Then we have
\[
m_{v_1}p_{v_1}(\pa) + m_{v_2}p_{v_2}(\pa) = m_{w} p_{w}(\pb) = (m_{v_1} + m_{v_2})p_{w}(\pb).
\]
It easily follows that $\epsilon_1(\pb) =  p_{w}(\pb) < p_{v_1}(\pa)+p_{v_2}(\pa) = \epsilon_1(\pa)$.

Second, suppose that $\pa$ is a double point belonging to $E_{v_1} \cap E_{v_2}$ and that $\pb$ is a double point, without loss of generality say that $\pb$ belongs to $E_{v_1} \cap E_w$, and note that
\[
m_{v_1}p_{v_1}(\pa) + m_{v_2}p_{v_2}(\pa) = m_{v_1}p_{v_1}(\pb) + (m_{v_1}+m_{v_2}) p_{w}(\pb),
\]
so that we have $\epsilon_1(\pb) <\epsilon_1(\pa)$.

Third, suppose that $\pa$ is a free point of $E_{v}$ and that $\pb$ is a double point, say that $\pb$ belongs to $E_v \cap E_w$. Then we have
\[
m_v p_{v}(\pa) = m_v p_{v}(\pb) + m_w p_{w}(\pb) = m_v \big(p_{v}(\pb) + p_{w}(\pb)\big),
\]
and therefore $\epsilon_1(\pa) = \epsilon_1(\pb)$ and $\epsilon_2(\pb) < \epsilon_2(\pa)$.

Finally, suppose that $\pa$ is a free point of $E_{v}$ and that $\pb$ is a free point of $E_w$, so that we have
\[
m_vp_{v}(\pa) = m_w p_{w}(\pb) = m_v p_{w}(\pb),
\]
which implies that $\epsilon(\pa)=\epsilon(\pb)$. Now, recall that $\beta(\pa) >0$ by the hypothesis that $ m(\Pi_{\cal D, \pa})>0$ and by Lemma \ref{lem:compatibility}. Since $\pa$ is a basepoint for the family of generic polar curves, we conclude that it is contained in the zero locus of the residual ideal sheaf $\mathcal{R}_0(\pi)$, see \eqref{eq:Residual}, so that $\ord_{\pa}(\mathcal{R}_0(\pi)) >0$. It now follows from Lemma~\ref{lem:bound} that
\(
\beta(\pb) < \beta(\pa),
\)
finishing the proof.
\end{proof}

We have now collected all the ingredients we need to show that the number of blowups needed from any given resolution of $(X,0)$ to achieve factorization through its Nash transform admits an upper bound that only depends on the topology of $(X,0)$.
To see this, let $\Gamma$ be any weighted graph which can be realized as the dual graph of some good resolution of $(X,0)$ factoring through its maximal ideal.
Then the auxiliary invariants of the closed points of the exceptional divisor of such resolution are always bounded by the 4-tuple
\[
\mathrm{Aux}(\Gamma) = \max_{X',\pi',\pa} \{\Aux(\pa)\}
\]
where the maximum is taken over the set of triples $\big((X',0),\pi',\pa\big)$, where $(X',0)$ is a normal surface singularity, $\pi' \colon X'_{\pi'}\to X'$ is a good resolution of $(X',0)$ realizing the weighted graph $\Gamma$, and $\pa$ is a closed point of the exceptional divisor $(\pi')^{-1}(0)$ of $\pi'$.
Observe that, as noted in \eqref{eq:top_bound_for_Aux} the invariant $\mathrm{Aux}(\Gamma)$ is bounded from above by an element of $\Z^4$ which only depends on the topology of $(X,0)$.
It follows immediately from Lemma~\ref{lem:induction} that if $\pi \colon X_\pi\to X$ is a good resolution of $(X,0)$ which does not factor through its Nash transform and $\pi'\colon X_\pi\to X$ is the good resolution of $(X,0)$ obtained by blowing up once every basepoint of the family of generic polar curves of $(X,0)$, then we have
\[
\max_{\pb\in(\pi')^{-1}(0)}\{\mathrm{Aux}(\pb)\}
	<
\max_{\pa\in\pi^{-1}(0)}\{\mathrm{Aux}(\pa)\} \leq \mathrm{Aux}(\Gamma).
\]
The result we are after follows now immediately by induction, since the weighted dual graph $\Gamma_{\pi'}$ belongs to a finite family of dual graphs which only depends on the weighted dual graph $\Gamma_\pi$ and on $\mathrm{Aux}(\Gamma)$.
Indeed, the family of generic polar curves of $(X,0)$ can have at most $m(\Pi,0)$ basepoints, and we have already proven that $m(\Pi,0)$ is bounded by the topology of $(X,0)$, so that the number of blowups, and hence the combinatorics of $\Gamma_{\pi'}$, is itself bounded by the topology of $(X,0)$.
This concludes the proof of Theorem~\ref{thm:main}.\hfill\qed


\section{Polar exploration}

We have discussed in Remark~\ref{rem:bound_L} how to give sharper bounds on the number of realizable $\cal L$-vectors on a given resolution graph $\Gamma$.
On the other hand, the bound on the number of $\cal P$-vectors in Theorem~\ref{thm:main}, being solely based on the polar multiplicities given by the L\^e--Greuel--Teissier formula, gives no information on the position of polar curves relative to the hyperplane sections. 
We now discuss restrictions on these relative positions, thus providing a sharper bound to the number of realizable $\cal P$-vectors and a better understanding on the polar geometry of singularities realizing $\Gamma$. 
For this, we can shift our focus to the following situation, which we refer to as the problem of \emph{polar explorations}: given a resolution graph $\Gamma$ and a $\cal L$-vector $L$ on $\Gamma$, how can we give geometric conditions on a $\cal P$-vector $P$ such that the triplet $(\Gamma,L,P)$ may be realizable?

In this section we describe two distinct tools that can be very effective in addressing this question.

\subsection{Inner rates and the Laplacian formula}
Assume that $(X,0)$ is a normal surface germ realizing the pair $(\Gamma,L)$. 
The first tool we make use of is a result on the structure of this germ with respect to its \emph{inner metric} $d_{\inn}$, which is defined up to a bi-Lipschitz homeomorphism by embedding $(X,0)$ in a smooth germ $(\C^n,0)$ and considering the arc-length on $X$ induced by the usual Hermitian metric of $\C^n$.
Denote by $S_{\epsilon}$ the sphere in $\C^n$ having center $0$ and radius $\epsilon>0$.
Given two distinct curve germs $(\gamma,0)$ and $(\gamma',0)$ on $(X,0)\subset(\C^n,0)$, the \emph{inner contact} between $\gamma$ and $\gamma'$ is the rational number $q_{\inn}=q_{\inn}(\gamma, \gamma')$ defined by 
\[
d_{\inn} \big(\gamma \cap S_{\epsilon}, \gamma' \cap S_{\epsilon}\big) = \Theta(\epsilon^{q_{\inn}}),
\]
where given two function germs $f,g\colon \big([0,\infty),0\big)\to \big([0,\infty),0\big)$ we write $f(t) = \Theta \big(g(t)\big)$ if there exist real numbers $\eta>0$ and $K >0$ such that ${K^{-1}}g(t) \leq f(t) \leq K g(t)$ for all $t\geq0$ satisfying $f(t)\leq \eta$. 

Let $\pi\colon X_\pi \to X$ be a good resolution of $(X,0)$ and let $E_v$ be an irreducible component of $\pi^{-1}(0)$.
Then the \emph{inner rate} $q_v$ of $E_v$ is defined as the inner contact $q_\mathrm{inn}(\gamma,\gamma')$, where $\gamma$ and $\gamma'$ are are two curve germs on $(X,0)$ that pullback via $\pi$ to two curvettes at distinct points of $E_v$.
This definition only depends on the exceptional component $E_v$ and not on the choice of a good resolution on which the component appears (see \cite[Lemma~3.2]{BelottodaSilvaFantiniPichon2019}).

We now recall a deep result, the so-called \emph{Laplacian formula} for the inner rate function from \cite{BelottodaSilvaFantiniPichon2019}.
In order to state it we will introduce two additional vectors indexed by the vertices of the dual graph $\Gamma_\pi$ of a good resolution $\pi\colon X_\pi\to X$ of $(X,0)$.
For every vertex $v$ of $\Gamma_\pi$, set $k_v=\val_{\Gamma_\pi}(v)+2g(v)-2$ and $a_v=m_vq_v$, and consider the vectors $K_\pi=(k_v)_{v\in V(\Gamma_\pi)}$ and $A_\pi=(a_v)_{v\in V(\Gamma_\pi)}$. Let $L_\pi$ and $P_\pi$ be respectively the $\cal L$- and the $\cal P$-vector of $(X,0)$ as before. 
Then the following equality holds:
\begin{equation}\label{equation:laplacian_formula_effective}
A_\pi = I_{\Gamma}^{-1}\cdot(K + L - P_\pi) \,.
\end{equation}
This equality is an effective version (see \cite[Proposition~5.3]{BelottodaSilvaFantiniPichon2019}) of the main result of \emph{loc.\ cit.}
Observe that in our situation $I_{\Gamma}$ and $K + L$ are known, while $A_\pi$ and $P_\pi$ are not, but either of the two is determined by the other one thanks to the formula above. 

In what follows, we argue that in general there is only a very limited number of possible values of $A_\pi$, therefore restricting the number of possible configurations of $P_\pi$.
Since the vector $P_\pi$ has positive coordinates, then $-I_{\Gamma}^{-1} \cdot P_\pi$ is an element of the Lipman cone $\cal E^+$ of $\Gamma$, and so $A_\pi$ belongs to the translate $I_{\Gamma}^{-1}\cdot(K + L) + \cal E^+$ of $\cal E^+$.
Moreover, if a vertex $v_0$ of $\Gamma$ is an $\cal L$-node of $(X,0)$, that is if $l_{v_0}\neq0$, we know that its inner rate $q_{v_0}$ must be equal to 1 (see the paragraph preceding Proposition~3.9 of \cite{BelottodaSilvaFantiniPichon2019}).
This implies that $A_\pi$ belongs to the intersection for $v_0$ running over the set of $\cal L$-nodes of $(X,0)$ of the hyperplanes of $\Z^{V(\Gamma)}$ whose $v_0$-th coordinate is equal to $m_{v_0}$.
Since $D>0$ for every nonzero element $D$ of the Lipman cone $\cal E^+$, this intersection is finite, 
and in fact rather small.
Therefore, the vector $P_\pi$ can only take fewer values than those allowed by the proof of Theorem~\ref{thm:main}.
This construction is illustrated in Figure~\ref{figure:cones} below.

\begin{figure}[ht]
	\centering
	\begin{tikzpicture}[scale=1]
	\draw[thick,->] (-3.5,0) -- (6.5,0);
	\draw[thick,->] (1,-1.35) -- (1,5.5);
	
	\draw[very thick,color=blue] (1,0) -- (3.65,5.3);
	\draw[very thick,color=blue] (1,0) -- (6,1.666);
	
	\draw[very thick,color=red] (-3,-1) -- (7,2.333);
	\draw[very thick,color=red] (-3,-1) -- (.15,5.3);

	\draw[very thick,color=ForestGreen,dashed] (-3.5,2) -- (1,2);
	\draw[ultra thick,color=ForestGreen] (1,2) -- (6,2);
	\draw[very thick,color=ForestGreen,dashed] (5.9,2) -- (7,2);
	
	\foreach \x in {-3,-3,-2,-1,0,1,2,3,4,5,6}
	\foreach \y in {-1,0,1,2,3,4,5}
	\draw[fill ] (\x,\y)circle(1pt);
	
	\draw[fill,color=blue] (1,0)circle(1.7pt);
	\draw[fill,color=blue] (2,1)circle(1.7pt);
	
	\draw[fill,color=red] (-3,-1)circle(1.7pt);
	
	\draw[fill] (1,2)circle(1.7pt);
	
	\draw[fill,color=ForestGreen] (3,2)circle(1.7pt);

	\begin{small}
	\node(a)at(3.7,4.5){{\color{blue}$\mathcal E^+$}};
	\node(a)at(-.6,.6){{\color{red}$I_{\Gamma}^{-1}\cdot(K + L)+\mathcal E^+$}};
	\node(a)at(-2.4,4.5){$\mathbb Z^{V(\Gamma)}$};
	\end{small}
	
	\draw[thick,color=ForestGreen,->] (4.6,3.2) -- (4.4,2.1);
	
	\begin{scriptsize}
	\node(a)at(-2.6,-1.3){{\color{red}$I_{\Gamma}^{-1}\cdot(K + L)$}};
	\node(a)at(2.3,1.2){{\color{blue}$Z_{\min}^{\Gamma}$}};
	\node(a)at(4.7,3.4){{\color{ForestGreen}possible values for $A_\pi$}};
	\node(a)at(3,2.3){{\color{ForestGreen}$A_\pi$}};
	\node(a)at(.48,1.7){$(m_{v_0},0)$};
	\end{scriptsize}
	\end{tikzpicture}
	\caption{	Observe that, since $Z_{\min}^{\Gamma}\succ 0$, then the Lipman cone $\cal E^+$ (in blue), and thus $I_{\Gamma}^{-1}\cdot(K + L)+\mathcal E^+$ (in red),  contain no horizontal line. Only six values of $A_\pi$ are possible in this example.}
	\label{figure:cones}
\end{figure}
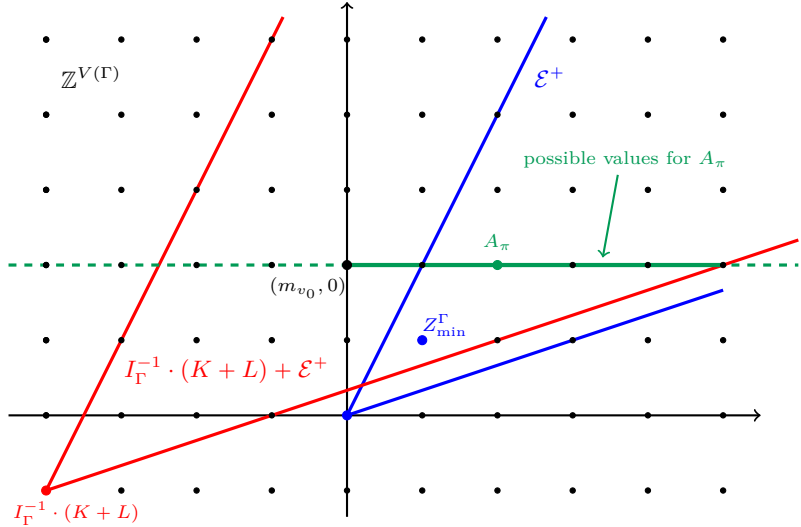

\subsection{Topological constraints}
\label{subsec:topological_constraints}
Additional restrictions may be derived from the topological properties of the germ $(X,0)$, and more specifically from the \emph{local degrees} associated with a generic projection $\ell\colon(X,0)\to(\C^2,0)$. Let $\pi\colon X_\pi\to X$ be a good resolution of $(X,0)$ and let $\ell\colon(X,0)\to(\C^2,0)$ be a generic projection.
Let $\sigma_{\ell} \colon Y_{\ell} \to \C^2$ be a sequence of point blowups of $(\C^2,0)$ such that the rational map $\sigma_\ell^{-1}\circ\ell\circ\pi$ maps each component of $\pi^{-1}(0)$ surjectively to a component of $\sigma_\ell^{-1}(0)$ (that is, no $E_v$ is contracted), so that $\ell$ induces a map $\tilde\ell\colon \Gamma_\pi\to \Gamma_{\sigma_\ell}$ between (the topological spaces underlying) the graphs $\Gamma_\pi$ and $\Gamma_{\sigma_\ell}$.
Let ${\pi_\ell} \colon X_{\pi_\ell} \to X$ be a good resolution of $(X,0)$ such that $\pi_\ell\circ \ell$ factors through $\sigma_\ell$, let $v$ be a vertex of $\Gamma_{\pi}$ and let $v_1, \ldots, v_r$ be the vertices of $\Gamma_{\pi_\ell}$ that are adjacent to $v$ and contained in $\Gamma_\pi$.
Let $\Gamma_v$ be the subgraph of $\Gamma_{\pi_\ell}$ defined as the closure in $\Gamma_{\pi_\ell}$ of the connected component of  $\Gamma_{\pi_\ell}\setminus\widetilde\ell^{-1}\big(\widetilde\ell(\{v_1,\ldots,v_r\})\big)$ containing $v$, and consider the subgraph of $\Gamma_{\sigma_\ell}$ defined as $T_v=\widetilde\ell(\Gamma_v)$. 
Set
\[
\cal N(\Gamma_v) = \overline{ \bigcup_{w \in V(\Gamma_v)} N(E_w) \setminus    \bigcup_{w' \in V(\Gamma_{\pi_\ell}) \setminus V( \Gamma_v)} N(E_{w'})}
\]
and
\[
\cal N(T_v) =  \overline{\bigcup_{w \in V(T_v)} N(E_w) \setminus   \bigcup_{w' \in V(\Gamma_{\sigma_\ell}) \setminus V( T_v)} N(E_{w'})}.
\] 
Adjusting the fiber bundles $N(E_w)$ if necessary, by restricting $\ell$ to $\pi_\ell\big(\cal N(\Gamma_v)\big)$ we obtain a cover 
\(
\ell|_{\pi_\ell(  \cal N(\Gamma_v))}  \colon   \pi_\ell\big(  \cal N(\Gamma_v)\big) \to \sigma_{\ell}\big(\cal N(T_v)\big).
\)
Following \cite[Definition~4.16]{BelottodaSilvaFantiniPichon2019}, we call \emph{local degree of $\ell$ at $v$} the degree of this cover, and we denote it by $\deg(v)$. Pick now a generic linear form $h\colon (X,0)\to (\C,0)$ on $(X,0)$, denote by $\widehat{F}_v$ the intersection of $\pi_\ell \big(\cal N(\Gamma_{v})\big)$ with the Milnor fiber $X\cap\{h=t\}$ of $x$, and set $\widehat{F}'_v=\ell_v(\widehat{F}_v)$.
Restricting again $\ell$, we get a $\deg(v)$-cover   $\ell|_{\widehat{F}_v}\colon \widehat{F}_v\to \widehat{F}_v'$.
The Hurwitz formula applied to this cover yields the following equality:
\begin{lemma}\label{lem:hurwitz}
	\(
	\chi(\widehat{F}_v)+m_vp_v = \deg(v) \chi(\widehat{F}'_v).
	\)
\end{lemma}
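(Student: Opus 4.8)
The plan is to apply the Riemann--Hurwitz formula to the branched cover $\ell|_{\widehat F_v}\colon \widehat F_v\to \widehat F_v'$ of degree $\deg(v)$, and then identify the ramification contribution with the term $m_vp_v$. First I would recall that $\widehat F_v$ and $\widehat F_v'$ are compact real surfaces with boundary (pieces of Milnor fibers, cut out by the tubular neighborhoods $\cal N(\Gamma_v)$ and $\cal N(T_v)$), and that $\ell$ restricts to a genuine finite branched cover between them; the branch locus is contained in the critical locus of $\ell$, which is precisely the polar curve $\Pi$ of $\ell$. So the ramification points of $\ell|_{\widehat F_v}$ are exactly the points where the polar curve $\Pi_{\cal D}$ meets the slice $\widehat F_v = \pi_\ell(\cal N(\Gamma_v))\cap\{h=t\}$.

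The key computation is then to count these ramification points with multiplicity. Since $\pi_\ell$ factors through the blowup of the maximal ideal, the generic linear form $h$ pulls back to a divisor whose strict transform consists of $l_w$ curvettes at each component $E_w$, and the Milnor fiber $\{h=t\}$, intersected with $\cal N(\Gamma_v)$, meets each component $E_w$ of $\Gamma_v$ in $l_w$ disjoint small disks transverse to $E_w$. On the other hand, the polar curve $\Pi$, having strict transform meeting $E_w$ in $p_w$ points (once we are on a resolution through the Nash transform, or more generally counting intersection multiplicities), contributes to $\widehat F_v$ a number of branch points governed by the intersection number of $\Pi^*$ with the slice. A short local analysis at each curvette shows that the total ramification index of $\ell|_{\widehat F_v}$, i.e. $\sum_{x}(e_x-1)$ over ramification points $x$, equals $\sum_{w\in V(\Gamma_v)} m_w p_w$; the relevant case is a single vertex $v$, where this is just $m_v p_v$ (this is the standard computation already present in \cite[\S4]{BelottodaSilvaFantiniPichon2019}, of which Lemma~\ref{lem:hurwitz} is the Euler-characteristic shadow). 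Plugging into Riemann--Hurwitz,
\[
\chi(\widehat F_v) = \deg(v)\,\chi(\widehat F_v') - \sum_{x}(e_x-1) = \deg(v)\,\chi(\widehat F_v') - m_vp_v,
\]
which is the claimed identity after rearranging.

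I expect the main obstacle to be purely bookkeeping rather than conceptual: one must check carefully that $\ell|_{\widehat F_v}$ is indeed a \emph{proper} branched cover of the stated degree (no loss of sheets near the boundary of $\widehat F_v$, which is why the fiber bundles $N(E_w)$ may need to be adjusted, as noted before the statement), and that the branch points all lie in the interior, so that the boundary contributes nothing to the Euler characteristic comparison and the classical Riemann--Hurwitz formula for surfaces with boundary applies verbatim. The other delicate point is matching the algebraic intersection number $p_v = \Pi^*\cdot E_v$ with the geometric count of branch points of the slice map: this uses that the family $(\Pi_{\cal D})$ is equisingular with strong simultaneous resolution, so that for generic $\cal D$ the strict transform of $\Pi_{\cal D}$ meets $E_v$ transversally at $p_v$ distinct free points, each contributing exactly one simple branch point with local ramification index matching the local degree of $\ell$ along the corresponding curvette. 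Once these two points are in place, the computation is immediate.
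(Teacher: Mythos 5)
Your proposal is correct and follows essentially the same route as the paper, which simply invokes the Hurwitz formula for the degree-$\deg(v)$ cover $\ell|_{\widehat F_v}\colon\widehat F_v\to\widehat F_v'$ with the ramification term given by the trace of the polar curve on the Milnor fiber. One small imprecision: the factor $m_v$ arises because each of the $p_v$ polar branches through $E_v$ meets the slice $\{h=t\}$ in $m_v$ ramification points (as $h\circ\pi$ has order $m_v$ along $E_v$), not in a single one, but your total $\sum_x(e_x-1)=m_vp_v$ is the right count.
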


\begin{remark}
	It is worth pointing out that the map $\tilde{\ell}$ and the local degree $\deg(v)$ can be defined more intrinsically, without the need of choosing a modification of $(\C^2,0)$, by working with suitable valuation spaces.
	We refer the reader to \cite{BelottodaSilvaFantiniPichon2019}, and in particular to sections 2.1, 2.2, and 4.6 of \emph{loc.\ cit.}, for a more thorough discussion.
\end{remark}

\section{An example of polar exploration}

\begin{example}\label{ex:MaugendreMichel2017}
	We will discuss in detail Example 3 from the paper \cite{MaugendreMichel2017}, showing that we can determine its $\cal P$-vector completely.
	Consider the hypersurface $(X,0)$ in $(\C^3,0)$ defined by the equation  $z^2=(y+x^3)(y+x^2)(x^{34}-y^{13})$.
	The dual graph of the minimal resolution of $(X,0)$ which factors through the blowup of its maximal ideal is given in Figure~\ref{exMM_figure 1}. 
	All exceptional curves are rational, the arrow represents the strict transform of a generic linear form, and the negative numbers are the self intersections of the irreducible components of $\pi^{-1}(0)$. 
	The multiplicities $m_v$, which are computed from this data using the equalities~\eqref{eq:IdentityTotalTransform} (page \pageref{eq:IdentityTotalTransform}), are within parentheses in the figure.
	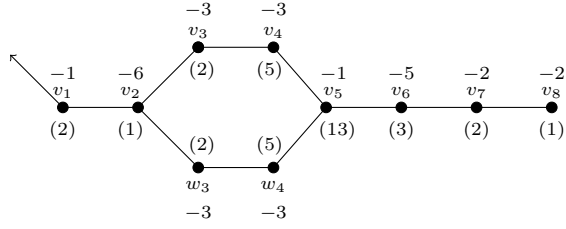
\begin{figure}[ht] 
		\centering
		\begin{tikzpicture}
		
		\draw[fill ] (0,0)circle(2pt);
		\draw[fill ] (1,0)circle(2pt);
		\draw[fill ] (1.8,0.8)circle(2pt);
		\draw[fill ] (1.8,-0.8)circle(2pt);
		\draw[fill ] (1.8,0.8)circle(2pt);
		\draw[fill ] (2.8,-0.8)circle(2pt);
		\draw[fill ] (2.8,0.8)circle(2pt);
		\draw[fill ] (3.5,0)circle(2pt);
		\draw[fill ] (4.5,0)circle(2pt);
		\draw[fill ] (5.5,0)circle(2pt);
		\draw[fill ] (6.5,0)circle(2pt);
		
		\draw[thin ](0,0)--(1,0);
		\draw[thin ](1,0)--(1.8,0.8);
		\draw[thin ](1.8,0.8)--(2.8,0.8);
		\draw[thin ](2.8,0.8)--(3.5,0);
		\draw[thin ](1,0)--(1.8,-0.8);
		\draw[thin ](1.8,-0.8)--(2.8,-0.8);
		\draw[thin ](2.8,-0.8)--(3.5,0);
		\draw[thin ](3.5,0)--(6.5,0);

		\draw[thin,>-stealth,->](0,0)--+(-0.7,0.7);
		
		\begin{scriptsize}
		\node(a)at(0,-0.3){$(2)$};
		\node(a)at(.9,-0.3){$(1)$};
		
		\node(a)at(1.85,.5){$(2)$};
		\node(a)at(2.75,.5){$(5)$};
		\node(a)at(1.85,-.5){$(2)$};
		\node(a)at(2.75,-.5){$(5)$};
		\node(a)at(3.65,-0.3){$(13)$};
		\node(a)at(4.5,-0.3){$(3)$};
		\node(a)at(5.5,-0.3){$(2)$};
		\node(a)at(6.5,-0.3){$(1)$};

		\node(a)at(0,0.45){$-1$};
		\node(a)at(.9,0.45){$-6$};
		\node(a)at(1.8,1.3){$-3$};
		\node(a)at(2.8,1.3){$-3$};
		\node(a)at(1.8,-1.4){$-3$};
		\node(a)at(2.8,-1.4){$-3$};
		\node(a)at(3.6,0.45){$-1$};
		\node(a)at(4.5,0.45){$-5$};
		\node(a)at(5.5,0.45){$-2$};
		\node(a)at(6.5,0.45){$-2$};
		
		\node(a)at(0,0.2){$v_1$};
		\node(a)at(.9,0.2){$v_2$};
		\node(a)at(1.8,1){$v_3$};
		\node(a)at(2.8,1){$v_4$};
		\node(a)at(1.8,-1.05){$w_3$};
		\node(a)at(2.8,-1.05){$w_4$};
		\node(a)at(3.6,0.2){$v_5$};
		\node(a)at(4.5,0.2){$v_6$};
		\node(a)at(5.5,0.2){$v_7$};
		\node(a)at(6.5,0.2){$v_8$};
		\end{scriptsize}	
		\end{tikzpicture}
		\caption{The graph $\Gamma_\pi$, decorated with the self-intersections and multiplicities.}\label{exMM_figure 1}
	\end{figure}	   
	
	Let us now focus on determining the $\cal P$-vector of $(X,0)$ from the data we have available.	Observe that in the following discussion we will not derive any information from the analytic type of the singularity $(X,0)$, but only from the weighted graph $\Gamma_\pi$ and from the multiplicities of its vertices, so that our conclusion will hold for any singularity $(S,0)$ realizing the same data.
	This means in particular that the resolution of $(S,0)$ whose weighted dual graph is $\Gamma_\pi$ will be required to factor through the blowup of the maximal ideal of $(S,0)$, so that the multiplicity $m(S,0)$ of $(S,0)$ is the sum of the products $m_v l_v$ over the vertices $v$ of $\Gamma_\pi$, which in this case is equal to $2$.
	
	Applying \cite[Proposition~3.9]{BelottodaSilvaFantiniPichon2019} to the vertex $v_8$, we deduce that the inner rates are strictly increasing along a path from $v_1$ to $v_8$.
	In particular, if $\widetilde\ell$ is the graph morphism of subsection~\ref{subsec:topological_constraints}, one of the two chains going from $v_2$ to $v_5$ must be sent by $\widetilde\ell$ surjectively onto a chain $\gamma$ in the tree $\widetilde\ell(\Gamma_\pi)$.
	It follows that the image through $\widetilde\ell$ of the second chain from $v_2$ to $v_5$ should contain $\gamma$ as well, and since the multiplicity of the surface is $2$, this shows that the local degree on each of them cannot exceed $1$, so that it must be equal to 1, and therefore the image of both must be precisely $\gamma$.
	In particular the two strings are also strings of the graph $\Gamma_{\pi_\ell}$, where $\pi_\ell$ is the resolution of $(X,0)$ introduced in the discussion of subsection~\ref{subsec:topological_constraints}.
	Then, the connected components of $\Gamma_{\pi} \setminus \{v_5\}$ and $\Gamma_{\pi_\ell} \setminus \{v_5\}$ containing $v_1$ coincide, and thus we can determine $p_v$ for each vertex $v$ of this subgraph by applying Lemma~\eqref{lem:hurwitz}.
	We obtain $p_{v_1} = p_{v_3} =  p_{v_4} = p_{w_3}  = p_{w_4}  =  0$ and $p_{v_2}=1$.
	
	Since the inner rate function achieve its maximum strictly on the vertex $v_8$, then $v_8$ keeps valency one in the graph $\Gamma_{\pi_\ell}$, and applying again Lemma~\eqref{lem:hurwitz} we obtain $\chi(F_{v_8})+m_vp_{v_8} = \deg(v_8) \chi(F'_{v_8})$. 
	Since $\chi(F_{v_8})=1$, we obtain that  $\chi(F'_{v_8}) \geq 1$, so   that $\chi(F'_{v_8}) =1$ since $F'_{v_8}$ has one boundary component. 
	Therefore, $p_{v_8} =   \deg(v_8) -1$, so $p_{v_8} \in \{0,1\}$ since $ \deg(v_8) $ cannot exceed the multiplicity of $(S,0)$, which is $2$. 
	
	Notice that we do not know at this point whether $\Gamma_{\pi_\ell}$ has or not an extra edge adjacent to one of the vertices $v_5$, $v_6$, or $v_7$ whose image through $\widetilde\ell$ is contained in $\widetilde\ell(\Gamma_\pi)$. 
	Applying again Lemma~\eqref{lem:hurwitz}, we then have four cases: $(p_{v_5}, p_{v_6}, p_{v_7}, p_{v_8}) \in \{  (1,0,0,1), (1,0, 1,0), (1,1, 0,0),  (2,0,0,0)  \}$, with the first case corresponding to $\Gamma_{\pi} =\Gamma_{\pi_\ell}$.

	We now use the Laplacian formula recalled in equation~\eqref{equation:laplacian_formula_effective} (page \pageref{equation:laplacian_formula_effective}) to eliminate some of these possibilities for $P$ and to compute the inner rates. 
	Writing the formula for every vertex $v \in \{ v_1, v_2, v_3,v_4 , w_3,w_4\}$, for which we know $p_v$, and using the fact that $q_{v_1}=1$, we obtain the corresponding inner rates $q_v$ and the inner rate $q_{v_5}$, which are as follows: $q_{v_2} = 2,  q_{v_3} =q_{w_3} = \frac{5}{2}, q_{v_4} =q_{w_4} = \frac{13}{5}$, and $q_{v_5} =   \frac{34}{13}$. 
	
	Now, the Laplacian formula for $v_5$ yields $-13q_{v_5} + 5(q_{v_4} + q_{w_4}) + 3 q_{v_6} = 1-p_{v_5}$.
	Therefore $3 q_{v_6} + p_{v_5} = 9$, where $q_{v_6} \in \frac{1}{3} \N$ and $q_{v_6}  > q_{v_5} =   \frac{34}{13}$. 
	Therefore   $q_{v_6}=3$ and $p_{v_5} = 0 $ or $q_{v_6}=\frac{8}{3}$ and $p_{v_5} = 1$. 
	But we know from the Hurwitz arguments above that $p_{v_5} = 1$ or $2$. 
	Therefore, the only possibility is $q_{v_6}=\frac{8}{3}$ and $p_{v_5} = 1$. 
	
	The Laplacian formula for $v_6$ gives $ -15q_{v_6} + 13 q_{v_5} +2q_{v_7} = -p_{v_6}$. 
	Then $2q_{v_7} +p_{v_6}  =11$ with $q_{v_7} \in \frac{1}{2} \N,    q_{v_7}  > \frac{8}{3}$ and  and $p_{v_6} \leq 1$.
	The unique possibility is $q_{v_7} = 3$ and $p_{v_6} \leq 0$. 
	
	The Laplacian formula for $v_7$ gives $ q_{v_8} +p_{v_7}  =4 q_{v_7}-3 q_{v_6} = 4  $, with $q_{v_8} \geq0$, $q_{v_8}  > 3$, and $p_{v_7} \leq 1$.
	The unique possibility is $q_{v_8} = 4$ and $p_{v_7} \leq 0$. 
	
	Among the four possibilities for $(p_{v_5}, p_{v_6}, p_{v_7}, p_{v_8})$ identified above, the unique  possibility is then  $(1,0,0,1)$, so $p_{v_8}=1$.
	This is indeed compatible with the Laplacian formula for $v_8$. 
	
	We have obtained a unique possibility for $P$ and the inner rates, as shown in Figure~\ref{figure 3}. 
	Observe that since there are no edges joining two vertices with nonzero $p_{v}$, then the strict transform $\Pi^*$ of the polar curve  $\Pi$ by $\pi$ meets $E$ at smooth points of the exceptional divisor $\pi^{-1}(0)$. 
	Moreover, since each $p_{v_i}$ equals either zero or one, then $\pi$ is a good resolution of $\Pi$, that is $\pi^{-1}(0)$ is a simple normal crossing divisor.  
	The arrows in Figure~\ref{figure 3} represent the strict transform of a generic polar curve.
	
	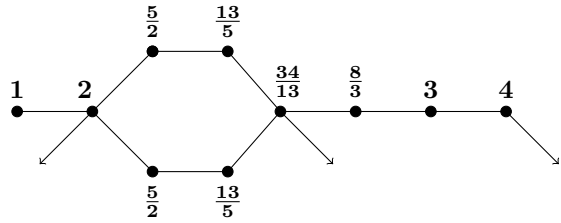
\begin{figure}[ht] 
		\centering
		\begin{tikzpicture}

		\draw[thin,>-stealth,->](1,0)--+(-0.7,-0.7);
		\draw[thin,>-stealth,->](3.5,0)--+(+0.7,-0.7);
		\draw[thin,>-stealth,->](6.5,0)--+(+0.7,-0.7);

		\draw[fill ] (0,0)circle(2pt);
		\draw[fill ] (1,0)circle(2pt);
		\draw[fill ] (1.8,0.8)circle(2pt);
		\draw[fill ] (1.8,-0.8)circle(2pt);
		\draw[fill ] (1.8,0.8)circle(2pt);
		\draw[fill ] (2.8,-0.8)circle(2pt);
		\draw[fill ] (2.8,0.8)circle(2pt);
		\draw[fill ] (3.5,0)circle(2pt);
		\draw[fill ] (4.5,0)circle(2pt);
		\draw[fill ] (5.5,0)circle(2pt);
		\draw[fill ] (6.5,0)circle(2pt);
		
		\draw[thin ](0,0)--(1,0);
		\draw[thin ](1,0)--(1.8,0.8);
		\draw[thin ](1.8,0.8)--(2.8,0.8);
		\draw[thin ](2.8,0.8)--(3.5,0);
		\draw[thin ](1,0)--(1.8,-0.8);
		\draw[thin ](1.8,-0.8)--(2.8,-0.8);
		\draw[thin ](2.8,-0.8)--(3.5,0);
		\draw[thin ](3.5,0)--(6.5,0);
		
		\node(a)at(0,0.3){$\mathbf  1$};
		\node(a)at(.9,0.3){$\mathbf  2$};
		\node(a)at(1.8,1.2){$ \frac{\mathbf  5}{\mathbf  2}$};
		\node(a)at(2.8,1.2){$\frac{\mathbf{13}}{\mathbf  5}$};
		\node(a)at(1.8,-1.2){$ \frac{\mathbf  5}{\mathbf  2}$};
		\node(a)at(2.8,-1.2){$  \frac{\mathbf{13}}{\mathbf  5}$};
		\node(a)at(3.6,0.4){$  \frac{\mathbf{34}}{\mathbf{13}}$};
		\node(a)at(4.5,0.4){$ \frac{\mathbf{8}}{\mathbf{3}}$};
		\node(a)at(5.5,0.3){$\mathbf{3}$};
		\node(a)at(6.5,0.3){$\mathbf{4}$};
		
		\end{tikzpicture}
		\caption{The graph $\Gamma_\pi$, decorated with the inner rates of its vertices and arrows corresponding to the components of a generic polar curve.}\label{figure 3}
	\end{figure}
\end{example}

\begin{remark}
	Observe that at this stage in the previous example it is not possible to know whether the $\cal P$-nodes of $(S,0)$ are $v_2,v_5$, and $v_8$, since in principle the resolution whose weighted dual graph is $\Gamma_\pi$ might not factor through the Nash transform of $(S,0)$.
	For example, in the case of the hypersurface $(X,0)$ we started with, none of those three vertices is a $\cal P$-node.
\end{remark}

\bibliographystyle{alpha}                              
\bibliography{biblio}

\begin{thebibliography}{BdSBGM17}

\bibitem[Art66]{Artin1966}
Michael Artin.
\newblock On isolated rational singularities of surfaces.
\newblock {\em Amer. J. Math.}, 88:129--136, 1966.

\bibitem[BdSBGM17]{BelottodaSilvaBierstoneGrandjeanMilman2017}
Andr\'{e} Belotto~da Silva, Edward Bierstone, Vincent Grandjean, and Pierre~D.
  Milman.
\newblock Resolution of singularities of the cotangent sheaf of a singular
  variety.
\newblock {\em Adv. Math.}, 307:780--832, 2017.

\bibitem[BdSFP19]{BelottodaSilvaFantiniPichon2019}
Andr{\'e} Belotto~da Silva, Lorenzo Fantini, and Anne Pichon.
\newblock Inner geometry of complex surfaces: a valuative approach.
\newblock {\em arXiv preprint arXiv:1905.01677}, 2019.
\newblock To appear in \textit{Geom. Topol.}

\bibitem[BdSFP20]{BelottodaSilvaFantiniPichon2020}
Andr{\'e} Belotto~da Silva, Lorenzo Fantini, and Anne Pichon.
\newblock On {L}ipschitz {N}ormally {E}mbedded complex surface germs.
\newblock {\em arXiv preprint arXiv:2006.01773}, 2020.

\bibitem[BNP14]{BirbrairNeumannPichon2014}
Lev Birbrair, Walter~D. Neumann, and Anne Pichon.
\newblock The thick-thin decomposition and the bilipschitz classification of
  normal surface singularities.
\newblock {\em Acta Math.}, 212(2):199--256, 2014.

\bibitem[CNPP06]{CaubelNemethiPopescu-Pampu2006}
Cl\'{e}ment Caubel, Andr\'{a}s N\'{e}methi, and Patrick Popescu-Pampu.
\newblock Milnor open books and {M}ilnor fillable contact 3-manifolds.
\newblock {\em Topology}, 45(3):673--689, 2006.

\bibitem[dFD14]{FernexDocampo2014}
Tommaso de~Fernex and Roi Docampo.
\newblock Jacobian discrepancies and rational singularities.
\newblock {\em J. Eur. Math. Soc. (JEMS)}, 16(1):165--199, 2014.

\bibitem[dFEI08]{FernexEinIshii2008}
Tommaso de~Fernex, Lawrence Ein, and Shihoko Ishii.
\newblock Divisorial valuations via arcs.
\newblock {\em Publ. Res. Inst. Math. Sci.}, 44(2):425--448, 2008.

\bibitem[Eis95]{Eisenbud1995}
David Eisenbud.
\newblock {\em Commutative algebra}, volume 150 of {\em Graduate Texts in
  Mathematics}.
\newblock Springer-Verlag, New York, 1995.
\newblock With a view toward algebraic geometry.

\bibitem[IR13]{IshiiReguera2013}
Shihoko Ishii and Ana~J. Reguera.
\newblock Singularities with the highest {M}ather minimal log discrepancy.
\newblock {\em Math. Z.}, 275(3-4):1255--1274, 2013.

\bibitem[L\^00]{Le2000}
D\~{u}ng~Tr\'{a}ng L\^{e}.
\newblock Geometry of complex surface singularities.
\newblock In {\em Singularities---{S}apporo 1998}, volume~29 of {\em Adv. Stud.
  Pure Math.}, pages 163--180. Kinokuniya, Tokyo, 2000.

\bibitem[Lau71]{Laufer1971}
Henry~B. Laufer.
\newblock {\em Normal two-dimensional singularities}.
\newblock Princeton University Press, Princeton, N.J.; University of Tokyo
  Press, Tokyo, 1971.
\newblock Annals of Mathematics Studies, No. 71.

\bibitem[Lau72]{Laufer1972}
Henry~B. Laufer.
\newblock On rational singularities.
\newblock {\em Amer. J. Math.}, 94:597--608, 1972.

\bibitem[Lau83]{Laufer1983}
H.~Laufer.
\newblock Weak simultaneous resolution for deformations of {G}orenstein surface
  singularities.
\newblock {\em Proc. of Symposia in Pure math}, 40(2):1--29, 1983.

\bibitem[LT81]{LeTeissier1981}
D\~{u}ng~Tr\'{a}ng L\^{e} and Bernard Teissier.
\newblock Vari\'{e}t\'{e}s polaires locales et classes de {C}hern des
  vari\'{e}t\'{e}s singuli\`eres.
\newblock {\em Ann. of Math. (2)}, 114(3):457--491, 1981.

\bibitem[Mat80]{Matsumura1980}
Hideyuki Matsumura.
\newblock {\em Commutative algebra}, volume~56 of {\em Mathematics Lecture Note
  Series}.
\newblock Benjamin/Cummings Publishing Co., Inc., Reading, Mass., second
  edition, 1980.

\bibitem[MM20]{MaugendreMichel2017}
H{\'e}l{\`e}ne Maugendre and Fran{\c{c}}oise Michel.
\newblock On the growth behaviour of {H}ironaka quotients.
\newblock {\em J. Singul.}, 20:31--53, 2020.

\bibitem[N{\'e}m99]{Nemethi1999}
A.~N{\'e}methi.
\newblock Five lectures on normal surface singularities.
\newblock In {\em Low dimensional topology ({E}ger, 1996/{B}udapest, 1998)},
  volume~8 of {\em Bolyai Soc. Math. Stud.}, pages 269--351. J\'anos Bolyai
  Math. Soc., Budapest, 1999.
\newblock With the assistance of {\'A}gnes Szil{\'a}rd and S{\'a}ndor
  Kov{\'a}cs.

\bibitem[Neu81]{Neumann1981}
Walter~D. Neumann.
\newblock A calculus for plumbing applied to the topology of complex surface
  singularities and degenerating complex curves.
\newblock {\em Trans. Amer. Math. Soc.}, 268(2):299--344, 1981.

\bibitem[NNP11]{NemethiNeumannPichon2011}
A.~N\'{e}methi, Walter~D. Neumann, and A.~Pichon.
\newblock Principal analytic link theory in homology sphere links.
\newblock In {\em Topology of algebraic varieties and singularities}, volume
  538 of {\em Contemp. Math.}, pages 377--387. Amer. Math. Soc., Providence,
  RI, 2011.

\bibitem[Pic01]{Pichon2001}
Anne Pichon.
\newblock Fibrations sur le cercle et surfaces complexes.
\newblock {\em Ann. Inst. Fourier (Grenoble)}, 51(2):337--374, 2001.

\bibitem[Spi90]{Spivakovsky1990}
Mark Spivakovsky.
\newblock Sandwiched singularities and desingularization of surfaces by
  normalized {N}ash transformations.
\newblock {\em Ann. of Math. (2)}, 131(3):411--491, 1990.

\bibitem[Wag70]{Wagreich1970}
Philip Wagreich.
\newblock Elliptic singularities of surfaces.
\newblock {\em Amer. J. Math.}, 92:419--454, 1970.

\bibitem[Zar39]{Zariski1939}
Oscar Zariski.
\newblock The reduction of the singularities of an algebraic surface.
\newblock {\em Ann. of Math. (2)}, 40:639--689, 1939.

\end{thebibliography}

\vfill

\end{document}